\definecolor{darkblue}{rgb}{0.0,0,0.7}
\newcommand{\darkblue}{\color{darkblue}}
\definecolor{darkred}{rgb}{0.68,0,0}
\newcommand{\darkred}{\color{darkred}}
\definecolor{darkgreen}{rgb}{0,.38,0}
\newcommand{\darkgreen}{\color{darkgreen}}
\newcommand{\defn}[1]{\emph{\darkblue #1}}
\newcommand{\defna}[1]{\emph{\darkred #1}}
\newcommand{\defnb}[1]{\emph{\darkblue #1}}
\newcommand{\defng}[1]{\emph{\darkgreen #1}}
\setlist[enumerate]{
	label=\textnormal{({\roman*})},
	ref={\roman*}}
\def\th@plain{%
	\thm@notefont{}
	\itshape 
}
\def\th@definition{%
	\thm@notefont{}
	\normalfont 
}
\newtheorem{thm}{Theorem}[section]
\newtheorem{lemma}[thm]{Lemma}
\newtheorem*{claim*}{Claim}
\newtheorem{cor}[thm]{Corollary}
\newtheorem{prop}[thm]{Proposition}
\newtheorem{question}[thm]{Question}
\newtheorem*{algSc}{Scaling Algorithm}
\newtheorem*{algFS}{FinStar Algorithm}
\theoremstyle{definition}
\newtheorem{ex}[thm]{Example}
\newtheorem{rem}[thm]{Remark}
\numberwithin{figure}{section}
\numberwithin{equation}{section}
\def\rr{\mathbb R}
\def\sm{\smallsetminus}
\def\De{\Delta}
\def\cT{\mathcal T}
\def\ssu{\subset}
\def\<{\langle}
\def\>{\rangle}
\def\0{{\mathbf 0}}
\def\st{{\rm st}}
\def\lk{{\rm lk}}
\def\ptl{{\partial}}
\def\.{\hskip.06cm}
\def\ts{\hskip.03cm}
\def\.{\hskip.06cm}
\def\ts{\hskip.03cm}
\def\nin{\noindent}
\def\hr{{\text {\rm hr}}}
\title{All triangulations have a common stellar subdivision}
\author[Karim Adiprasito]{Karim A.~Adiprasito}
\author[Igor Pak]{Igor Pak}
\address{Sorbonne Université and Université Paris Cité, CNRS, IMJ-PRG, F-75005 Paris, France}
\email{karim.adiprasito@imj-prg.fr}
\address{Department of Mathematics, %
	UCLA, Los Angeles, CA 90095, %
	USA}
\email{pak@math.ucla.edu}
\thanks{\today}
\begin{document}
	
	\maketitle

	\begin{center} \emph{For Frank, in memory}
\end{center}	
	
\begin{abstract}
We address two longstanding open problems, one originating in PL topology, another in birational geometry.
First, we prove the weighted version of Oda's \emph{strong factorization conjecture} (1978), 
and prove that every two birational toric varieties are related by a common iterated blowup 
(at rationally smooth points).
Second, we prove that every two PL homeomorphic polyhedra have a common stellar subdivisions, 
as conjectured by Alexander in~1930.
\end{abstract}

	\section{Introduction} \label{s:intro}

	Let $Q$ be a geometric complex in $\rr^d$, and let $T$ be a
	triangulation of~$Q$.  Define a \defn{stellar subdivision}
	at point $z \in Q$ to be a transformation given by adding to $T$
	cones over all faces in $T$ containing~$z$ (see Figure~\ref{f:stell-plane}).
    We say that a triangulation $T$ can be
	\defn{obtained by stellar subdivisions} \ts from a triangulation~$S$, if there
	is a finite sequence of stellar subdivisions which start at~$S$ and end with~$T$.
	When $S$ can be obtained by stellar subdivisions from triangulation~$T$
	and~$T'$, it is called a \defn{common stellar subdivision}
    (see Figure~\ref{f:stellex} below).
	
\begin{thm}[{\rm \defna{\em weighted strong factorization theorem}}{}] \label{t:Oda-space}
Every two triangulations $A, B$ of a geometric complex in~$\rr^d$,
have a common stellar subdivision.
Moreover, if both $A$ and $B$ have coordinates in a field extension
$K$ over $\mathbb{Q}$, then so does the common stellar subdivision.
\end{thm}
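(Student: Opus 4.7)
The plan is to proceed by induction on dimension, using a common polyhedral refinement of $A$ and $B$ as the target and reducing the problem to a purely local statement on each top-dimensional cell.

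First, I would pass to the common polyhedral refinement $R$ of $A$ and $B$, whose cells are the nonempty intersections $\alpha \cap \beta$ for $\alpha \in A$ and $\beta \in B$. Each vertex of $R$ arises as an intersection of affine flats defined over $K$, so it automatically lies in $K$. The theorem then reduces to producing some triangulation $T$ refining $R$ that is reachable from each of $A$ and $B$ by finitely many stellar subdivisions whose centers lie in $K$.

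The induction is on $d = \dim Q$. The base case $d \le 1$ is immediate, since a common subdivision of two triangulations of a $1$-complex is obtained by inserting the missing vertices, and vertex insertion on an edge is a stellar subdivision. For the inductive step, I would process the cells $\sigma$ of $R$ in order of increasing dimension. By the inductive hypothesis, and by the fact that stellar subdivisions of a subcomplex extend coherently to the ambient triangulation, one may assume that on $\partial \sigma$ the two sides already agree on a common stellar refinement $T_{\partial \sigma}$. What remains is purely local: extend $T_{\partial \sigma}$ to a triangulation of $\sigma$ reachable by stellar subdivisions both from $A|_\sigma$ and from $B|_\sigma$.

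This interior extension is the technical heart of the argument and the principal obstacle. A first attempt picks an interior apex $z_A \in \sigma$, performs a stellar subdivision of $A|_\sigma$ at $z_A$, and uses further stellar moves to pull the result into the cone $z_A \ast T_{\partial \sigma}$; symmetrically one produces $z_B \ast T_{\partial \sigma}$ from $B|_\sigma$. One is then left with two cone triangulations over the same base but with different apices, and relating such cones by stellar subdivisions alone (with no inverse operations available) is precisely the sticking point that separates weak from strong factorization, and is where the Morelli--W\l{}odarczyk machinery provides equivalence only modulo blowdowns. The plan to overcome this in the weighted setting would be to introduce an auxiliary vertex on the segment $[z_A, z_B]$ (or, more generally, at a weighted centroid of finitely many candidate apices), and to exploit the extra flexibility supplied by weighted, rationally smooth centers in order to design a single sequence of stellar subdivisions that converges on both sides simultaneously. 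Since every center so introduced is a rational convex combination of points already in $K$, the field extension condition will be preserved automatically; the hard part is showing that the weights and the order of the stellar moves can always be arranged to make both sequences terminate at the same final triangulation.
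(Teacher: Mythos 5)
Your reduction steps are fine (common polyhedral refinement, induction on dimension, locality on cells, automatic preservation of the field $K$), and they roughly parallel the paper's reductions via Bing's extension lemma, Zeeman's refinement lemma, and the dimension induction on horizons. But you have correctly identified, and then left completely open, the actual content of the theorem: how to relate two cone-like triangulations over the same boundary data using stellar subdivisions \emph{only}, with no inverse moves. Your paragraph ``The plan to overcome this in the weighted setting would be to introduce an auxiliary vertex on the segment $[z_A,z_B]$ \dots the hard part is showing that the weights and the order of the stellar moves can always be arranged'' is an honest admission that you do not have an argument for this step. That step \emph{is} the theorem; everything before it is standard bookkeeping (and was already known --- the weak factorization theorem handles the rest).

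Concretely, what is missing is the paper's entire machinery of scaled fins, sheddings, and ray-centric subdivisions. The paper does not try to interpolate between two apices $z_A$ and $z_B$. Instead, it first subdivides $A$ so that it is \emph{shellable} with all intermediate complexes star-shaped at a single generic anchor $v$ (Lemma~\ref{lem:pullexist}, via Bruggesser--Mani), reduces to the case $A < B$, and then processes the facets of $A$ one at a time in shelling order. On each facet the key object is a \emph{scaled fin} anchored at the interior point: a triangulation whose radial projection away from the anchor maps simplices to simplices of the horizon. Proposition~\ref{prp:scalefin} (the Scaling Algorithm) shows that $B$ restricted to the facet can be stellarly subdivided to become a sheddable scaled fin, and Proposition~\ref{prp:shedtostellar} then shows that a sheddable scaled fin is itself an iterated stellar subdivision of its stripe. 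The dimension induction you propose does appear, but only at the level of matching horizons (Lemma~\ref{lem:raycenter}); it is supported by, not a substitute for, the scaling and shedding apparatus. Without some version of that apparatus --- in particular a lemma like \ref{prp:shedtostellar} that converts a one-sided combinatorial ordering (shedding) into a sequence of stellar moves in the correct direction --- your ``auxiliary apex'' plan has no mechanism for making the two subdivision sequences converge, and the argument does not close.
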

	
Over $\mathbb{Q}$, this implies the weighted version of \defn{Oda's conjecture} \ts
\cite{Oda78}, cf.~$\S$\ref{ss:finrem-unweighted}.
	
\begin{cor}\label{c:birational}
Every two birationally isomorphic toric varieties have a common toric
blowup (with blowups at rationally smooth points).
\end{cor}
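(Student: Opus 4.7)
The plan is to translate the corollary into the combinatorial language of fans via the standard toric dictionary, reduce to simplicial fans with common support, and then apply Theorem~\ref{t:Oda-space}.

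Let $X = X_\Sigma$ and $Y = X_{\Sigma'}$ be the two toric varieties, corresponding to fans $\Sigma,\Sigma'$ in a common lattice $N \cong \zz^d$. Under the standard dictionary, the existence of a common toric blowup is equivalent to the existence of a fan $C$ refining both $\Sigma$ and $\Sigma'$ via sequences of stellar subdivisions at rational rays; such a $C$ can exist only if $|\Sigma|=|\Sigma'|$, which we may assume after restricting to a common torus-invariant open subvariety. Stellar subdivision at a ray in the interior of a cone $\sigma$ corresponds to equivariant blowup along the orbit closure $V(\sigma)$, and simplicial cones correspond to rationally smooth toric (sub)varieties. So the corollary will follow once we produce a common rational simplicial refinement of $\Sigma$ and $\Sigma'$ through rays lying in interiors of simplicial cones.

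To obtain one, I would first refine each of $\Sigma$ and $\Sigma'$ into simplicial fans $A$ and $B$, respectively, by iterated stellar subdivisions at rational rays; this is a classical step and yields equivariant partial resolutions $\widetilde X \to X$ and $\widetilde Y \to Y$ with rationally smooth total spaces. Next, view $A$ and $B$ as two rational simplicial triangulations of the common geometric complex $|\Sigma|=|\Sigma'|\subset \rr^d$ (with the cones of the fan viewed as the simplices). Applying Theorem~\ref{t:Oda-space} with $K=\qqq$ then produces a common stellar subdivision $C$ of $A$ and $B$ whose added vertices all have rational coordinates. Translating back, $C$ corresponds to a toric variety $Z$ equipped with sequences of torus-equivariant blowups $Z\to\widetilde X$ and $Z\to\widetilde Y$, each blowup centered at a rationally smooth toric subvariety since every intermediate fan remains simplicial. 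Composing with the initial resolutions yields the claimed common toric blowup of $X$ and $Y$.

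The main obstacle I expect is not deep but largely expository, namely making rigorous the identification between two slightly different notions of \emph{stellar subdivision} --- the one used for triangulations in Theorem~\ref{t:Oda-space} versus the one used for fans in toric geometry --- and checking that each rational-point subdivision in the triangulation sense corresponds to an equivariant blowup along a rationally smooth toric subvariety. A secondary subtlety is the support condition $|\Sigma|=|\Sigma'|$: one must be careful that ``birational'' is strong enough to give this, or else first pass to a common invariant open subset. Once these two dictionary items are settled, the corollary reduces to a direct invocation of Theorem~\ref{t:Oda-space} over $K=\qqq$.
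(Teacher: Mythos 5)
Your approach matches the paper's: Corollary~\ref{c:birational} is presented as an immediate consequence of Theorem~\ref{t:Oda-space} over $K=\qqq$ via the toric dictionary, and the paper gives no further proof. Your outline is the right one, but two points deserve sharpening. First, the cones of a fan are unbounded, so they cannot literally be ``viewed as the simplices'' of a geometric complex to which Theorem~\ref{t:Oda-space} applies; one must slice. When the common support is a pointed cone, a rational transversal hyperplane $H$ turns a rational simplicial fan $\Sigma$ into a rational geometric triangulation $\Sigma\cap H$, and rational-ray stellar subdivisions of $\Sigma$ correspond exactly to rational-point stellar subdivisions of $\Sigma\cap H$; for complete fans one first stellar-subdivides both $\Sigma$ and $\Sigma'$ until each refines the face fan of a fixed rational simplicial polytope $P$ with the origin in its interior (using the refinement lemma of Remark~\ref{r:Zeeman}, whose proof adapts verbatim to fans), and then applies Theorem~\ref{t:Oda-space} to the two induced triangulations of $\partial P$. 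You flag this translation as the main obstacle and rightly judge it expository; spelling out the slicing is all that is missing. Second, your parenthetical fix for the support condition --- ``restricting to a common torus-invariant open subvariety'' --- does not actually repair the statement, since a common blowup of proper open pieces is not a common blowup of $X$ and $Y$; the correct reading is that equality of supports (e.g.\ both fans complete, i.e.\ both varieties proper) is an implicit hypothesis of the corollary, as in Oda's original formulation.
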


The \defn{weak factorization conjecture} \ts states that every two triangulations
$A, B$ of a geometric complex in~$\rr^d$ are connected by a sequence
of stellar subdivisions and their inverses. This was proved in
dimension at most three in \cite{Dan83}, and in full generality
in \cite{Wlo97}, see also \cite{AKMW02,IS10}.
	
Morelli claimed the proof of the strong factorization conjecture in \cite{Mor96},
which was shown incorrect in \cite{Mat00}.  In a positive direction, the conjecture was
confirmed in \cite{Mac21} for a very special class of polyhedra.
In~\cite{DK11}, the authors proposed an algorithmic construction,
which remains unproven (cf.~$\S$\ref{ss:finrem-DK}).
Our approach is notably different, but is also constructive.
As an application, we obtain the following result.
	
\begin{thm}[{\rm \defna{\em  former Alexander's conjecture}}{}] \label{t:Alex}
Every two PL homeomorphic simplicial complexes
have combinatorially isomorphic stellar subdivisions.
\end{thm}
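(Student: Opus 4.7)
The plan is to deduce \Cref{t:Alex} as a direct formal consequence of \Cref{t:Oda-space}, using only the definition of a PL homeomorphism together with two applications of the weighted strong factorization theorem.

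Let $K$ and $L$ be PL homeomorphic simplicial complexes. By the very definition of a PL homeomorphism, there exist ordinary simplicial subdivisions $K_1$ of~$K$ and $L_1$ of~$L$ together with a simplicial isomorphism $f : K_1 \to L_1$. Realize $K$ (and thus the refinement $K_1$) geometrically in some $\rr^N$, and similarly realize $L$ and~$L_1$ in some~$\rr^{N'}$. First, apply \Cref{t:Oda-space} to the two triangulations $K$ and $K_1$ of the common polyhedron $|K|$ to obtain a common stellar subdivision $M$. Using $f$, transport the sequence of stellar subdivisions taking $K_1$ to $M$ into a sequence of stellar subdivisions taking $L_1$ to a complex $M' \cong M$. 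Since stellar subdivisions depend only on the face being subdivided, and not on the specific interior point chosen, this transport is well defined; note that $M'$ is in particular a subdivision of~$|L|$.

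Second, apply \Cref{t:Oda-space} again, now to the triangulations $L$ and~$M'$ of $|L|$, to obtain a common stellar subdivision $N$. Transport the stellar refinement sequence $M' \to N$ back through the isomorphism $M \cong M'$ to obtain a stellar subdivision $N'$ of~$M$, with $N' \cong N$. Composing with the stellar subdivision $K \to M$ from the first step shows that $N'$ is itself a stellar subdivision of~$K$. Thus $N'$ is a stellar subdivision of~$K$, $N$ is a stellar subdivision of~$L$, and $N' \cong N$ combinatorially, which is exactly the statement of \Cref{t:Alex}.

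Once \Cref{t:Oda-space} is in hand, this argument is essentially bookkeeping. The one point worth checking is that stellar subdivisions are intrinsic combinatorial operations, so that for any simplicial isomorphism $\varphi : S \to S'$ and any face $\sigma \in S$, a stellar subdivision of $S$ at an interior point of $\sigma$ is combinatorially isomorphic to a stellar subdivision of $S'$ at an interior point of $\varphi(\sigma)$; this is well known and routine. Consequently I expect no substantive obstacle: all the difficulty of \Cref{t:Alex} is absorbed into \Cref{t:Oda-space}, and the two asymmetric applications of that theorem are what make the zig-zag close up into a genuine combinatorial isomorphism between stellar subdivisions of $K$ and of $L$.
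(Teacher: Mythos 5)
Your proof is correct and follows essentially the same route as the paper's: use the PL homeomorphism to pass between the two geometric realizations, transport stellar-subdivision sequences across the simplicial isomorphism, and close the zig-zag with Theorem~\ref{t:Oda-space}. The only small variation is that the paper obtains the initial stellar refinement of the pulled-back triangulation via the Zeeman-type lemma recalled in Remark~\ref{r:Zeeman}, whereas you invoke Theorem~\ref{t:Oda-space} a second time; this is bookkeeping, not a change of approach.
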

	
Alexander \cite{Ale30} was interested in PL homeomorphisms of polyhedral spaces,
and the theorem says that every two PL homeomorphic polyhedra have a common
stellar subdivision.  In this case, we do not have a geometric meaning,
but a topological one.
In dimension $d=2$, the conjecture was proved by Ewald \cite{Ewald}.
For the context of Alexander's conjecture, see e.g.\ \cite[$\S$4]{Lik99}.

It was noted by Anderson and Mn\"ev \cite{AM03}, that Theorem~\ref{t:Alex}
follows from Theorem~\ref{t:Oda-space}. We include a short proof in
Section~\ref{s:Alex} for completeness.  Finally, we
note that a topological version of the weak factorization conjecture 
was proved Alexander \cite{Ale30} (see also \cite{LN16,Pac91} 
and~$\S$\ref{ss:finrem-comp-top}).

	\medskip
	
	\section{Basic definitions and notation}\label{s:def}

Let $Q$ be a polyhedral complex embedded in~$\rr^d$.  We say that~$Q$
is a \defn{triangulation} if it is simplicial.	We use the same terms and
notation in both geometric (realized in the Euclidean space) and topological
setting (abstract complexes within the PL category), hoping this would not
lead to a confusion.  We use the terms ``geometric triangulation'',
``geometric (polyhedral) complex'', etc., when the distinction needs
to be emphasized. However, until Section~\ref{s:Alex}, we exclusively
work in the geometric setting.
	
Denote by $\cT(Q)$ the set of triangulations of~$Q$.
We write $S< T$ if $T$ is a \defn{refinement} of~$S$, where $S, T \in \cT(Q)$,
that is, if every simplex of $T$ is contained in a simplex of~$S$.
We write $S \lhd T$ if $T$ can be obtained from~$S$ by a
sequence of stellar subdivisions. In this case we say that
$T$ is an \defn{iterated stellar subdivision} of~$S$.
We will speak of \defn{common $($iterated$)$ stellar subdivision}
of triangulations $S,T\in \cT(Q)$ to mean a triangulation $R\in \cT(Q)$,
such that $S  \lhd R$ and  $T \lhd R$, see Figure~\ref{f:stellex}.

\begin{figure}[hbt]
		\includegraphics[width=15.1cm]{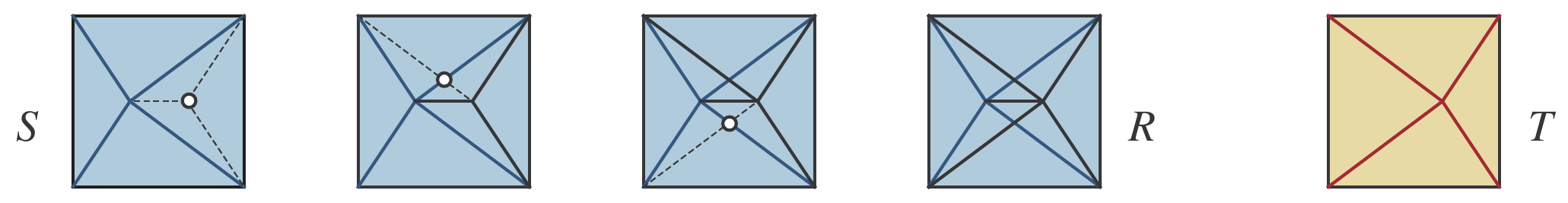}
		\caption{Triangulations $S,T$ of a square, a common stellar subdivision~$R$,
                    and stellar subdivisions from $S$ to~$R$.}
		\label{f:stellex}
\end{figure}

	
Let $T$ be a simplicial subcomplex and let $F$ be a face of~$T$.
The \defn{star} \. $\st_F T$ is the minimal simplicial subcomplex
of $T$ that contains all faces containing~$F$.  The \defn{link} \. $\lk_F T:= \ptl \. \st_F T$
is the boundary of $\st_F T$ with respect to the intrinsic topology of~$T$.
We use $T-F$ to denote maximal subcomplex of~$T$ which does not contain~$F$,
also called the \defn{antistar} of~$F$ in~$T$.

	\medskip

\section{Planar case}\label{s:plane}

In this and the following two sections we are concerned \ts
{\em only} \ts with geometric triangulations.  In this section,
we consider triangulations of a convex polygon.  In the next two
sections, we consider geometric complexes in higher dimensions.

Note that in the plane, there are only two types of stellar subdivisions
shown in Figure~\ref{f:stell-plane} below.\footnote{Strictly speaking,
there is a third combinatorial type, when the added vertex is on the boundary.
To illustrate that,
simply delete the bottom triangle from the second type of stellar subdivision.}
The circle and dashed lines indicate the added vertices
and edges.  We use this notation throughout the paper
(see e.g.\ Figure~\ref{f:stellex} above).

\begin{figure}[hbt]
		\includegraphics[width=15.5cm]{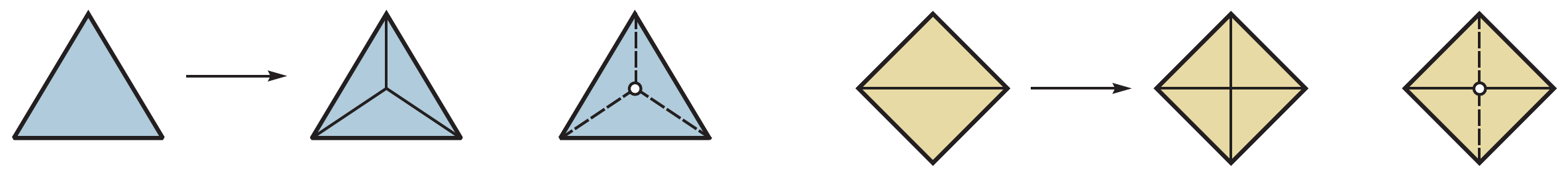}
		\vskip-.35cm
\caption{Two types of stellar subdivisions in the plane.}
\label{f:stell-plane}
\end{figure}

\subsection{Triangulations of polygons} \label{ss:plane-first}
%
%
The case of $d=2$ is especially elegant since in this
case a triangulation of a convex polygon in the plane is a face
to face subdivision into triangles.  In this section we present
a self-contained proof of the weighted Oda conjecture in the plane.

Let $Q\ssu \rr^2$ be a convex polygon in the plane, and let $T \in \cT(Q)$
be a triangulation of~$Q$.  Let $x\in Q$ be a point in the relative
interior of a triangle $(abc)$ in~$T$, and let $T'$ be a triangulation
obtained from $T$ by adding edges $xa$, $xb$ and~$xc$.  Similarly, let
$x\in Q$ be a point in the relative interior of an edge~$ab$, and let
$T'$ be a triangulation obtained from $T$ by adding edges $xc$ for all
triangles $(abc)$ in~$T$.  A \defn{stellar subdivision}
is an operation $T\mapsto T'$ in both cases.  Clearly, we then have $T < T'$.

\begin{thm}[{\rm strong factorization for convex polygons}{}]
\label{t:Oda-plane-quant}
Suppose triangulations $T, T'$ of a convex polygon~$Q$
have at most $n$ vertices.  Then there is a triangulation $S\in \cT(Q)$
which can be obtained by a sequence of at most \. $30\ts n^3$ stellar
subdivisions from both $T$ and~$T'$.
\end{thm}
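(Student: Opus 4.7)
The plan is to construct a common stellar subdivision $S$ explicitly, by adjoining the combinatorial data of $T'$ to $T$ via two basic planar operations, and then arguing that the same $S$ is reachable symmetrically from $T'$.

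First I would establish two tool lemmas. The \emph{vertex-insertion lemma}: for any triangulation $U\in\cT(Q)$ and any point $x$ in the support of $U$, the point $x$ can be adjoined to the vertex set of $U$ by a single stellar subdivision — a type-1 subdivision if $x$ lies in the relative interior of some triangle, a type-2 subdivision if $x$ lies on an edge. The \emph{edge-realization lemma}: given $U$ and a segment $\ell = uv$ whose endpoints are vertices of $U$ and whose relative interior meets the $1$-skeleton of $U$ transversally at points $p_1,\ldots,p_m$ ordered along $\ell$, the iterated edge-stellar subdivision at $p_1,p_2,\ldots,p_m$ (each performed on the current sub-edge of the refinement that contains the point) produces a refinement in which the chain $up_1, p_1p_2,\ldots,p_mv$ are all edges. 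The key geometric fact is that each subdivision at $p_i$ creates a new edge from $p_i$ to the vertex of the next triangle along $\ell$ (the vertex opposite to the subdivided sub-edge), so the chain fills in one edge at a time.

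Given the two lemmas, I construct $S$ in two phases. \emph{Phase~1:} starting from $T$, use the vertex-insertion lemma to adjoin every point of $V(T')\setminus V(T)$, each handled according to where it currently sits. \emph{Phase~2:} apply the edge-realization lemma to each edge of $T'$ that is not already present as a union of edges of the current refinement. Call the result~$S$; by construction $T\lhd S$, and every vertex and edge of $T'$ appears as a subcomplex of $S$.

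The main obstacle is then to show that $T'\lhd S$ for this same $S$. My plan is to exploit a \emph{crossing symmetry} peculiar to dimension two: whenever an edge $e\in T$ and an edge $e'\in T'$ cross transversally at a point $p$, the stellar subdivision at $p$ produces the \emph{same} local complex of four triangles whether $p$ is introduced as a type-2 subdivision of $e$ (the $T$-side) or of $e'$ (the $T'$-side); in both cases the four triangles join $p$ to the two endpoints of $e$ and the two endpoints of $e'$. Upgrading this local coincidence to a global statement is the subtle part, because phase~1 introduces \emph{artifact spoke edges} (from interior $T'$-vertices to corners of $T$-triangles) which are not part of the common geometric refinement of $T$ and $T'$, and one must verify that the symmetric $T'$-side procedure creates exactly the same spokes. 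I would handle this by choosing a careful order of operations — resolving crossings before extending spokes — and by induction on the number of unprocessed crossings and unprocessed vertices. The counting bound is immediate: $|V(S)|\le |V(T)|+|V(T')|+|E(T)|\cdot|E(T')|=O(n^2)$, and each stellar subdivision adjoins exactly one vertex, giving an $O(n^2)$ bound on the length of the subdivision sequence from each side — comfortably within $30n^3$.
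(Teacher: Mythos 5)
Your Phase~1/Phase~2 construction of $S$ from the $T$-side is sound (it is essentially Zeeman's refinement lemma, which the paper cites in Remark~\ref{r:Zeeman} and uses in its Step~2), but the proposal breaks down exactly where you flag the ``subtle part'': showing $T'\lhd S$. The difficulty is that the relation $T'<S$ (refinement) does not imply $T'\lhd S$ (stellar), and this implication is precisely the content of the theorem. Your ``crossing symmetry'' is a true local fact, but it does not globalize, because the two sides of your construction do not produce the same~$S$. Concretely: take $T$ to be a single triangle $abc$ and $T'$ its medial subdivision with edge-midpoints $m_1,m_2,m_3$. Running your Phases~1--2 from the $T$-side (inserting $m_1,m_3,m_2$, then realizing $m_2m_3$ across the spoke $am_1$ at a point~$q$) gives an~$S$ with seven vertices. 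Running them from the $T'$-side adds nothing at all, since $V_T\subset V_{T'}$ and every edge of~$T$ is a boundary edge, so that side returns $T'$ itself. There is no single~$S$ ``reachable symmetrically'': the spoke edges created on the $T$-side (here $am_1$) have no $T'$-side analogue, so the inductive invariant you would need never materializes. (In this particular example $T'\lhd S$ happens to hold by one extra move at~$q$, but that is a fact one must prove, not a consequence of symmetry.) Your vertex count $O(n^2)$ is also an underestimate, since in Phase~2 each new edge of~$T'$ must be subdivided at its crossings with the spokes created by earlier Phase~1 and Phase~2 operations, not only with edges of~$T$, pushing the count to~$O(n^3)$.

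The paper's proof does not attempt a symmetric construction at all. Instead it makes an \emph{asymmetric} choice of an anchor vertex~$v$, converts~$A$ into a \emph{scaled fin} $A'$ anchored at~$v$ by radial stellar subdivisions (Step~1), then refines~$B$ into a fin~$B'$ with $A'<B'$ by Zeeman-type insertions (Step~2), and finally proves $A'\lhd B'$ directly via the \emph{shedding routine} of Lemma~\ref{l:fin1}: one peels triangles off the horizon of the fin in a controlled order, and at each peel a single stellar move in~$A'$ reproduces the corresponding local structure of~$B'$. The scaled-fin hypothesis is what makes this peeling terminate and keeps every intermediate complex star-shaped, so the induction closes. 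This ordered, one-sided shedding argument is the key idea your proposal is missing; without a replacement for it, the step $T'\lhd S$ remains unproved.
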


The theorem follows from the stellar subdivision algorithm we present
below.  

\medskip

\subsection{Stellar subdivision of fins} \label{ss:plane-find}
Let $Q\ssu \rr^2$ be a polygon in the plane, that is, a disk with polygonal
boundary, and let $V$ be its set of vertices.  Fix a vertex $v\in V$
which we call an \defn{anchor}.

We say that $Q$ is \defn{star-shaped} at anchor~$v$, if $[u,v]\ssu Q$ for all
$u \in Q$. We call $Q$ a \defn{strictly star-shaped} if for every point
$x$ in~$Q$, the line segment from $x$ to $v$ intersects the boundary
of $Q$ only in $v$ and possibly~$x$.
Denote by $\ptl Q$ the boundary of~$Q$.  For a region $D \ssu Q$,
denote by $T|_D$ the restriction of triangulation~$T$ to~$D$.
%
%

Let $T \in \cT(Q)$ be a triangulation of a strictly star-shaped polygon~$Q$.
We call $T$ a \defn{fin} with respect to the anchor $v$.
We say a polytope $P$ is \defn{compatible} with a polyhedral complex $X$
if restricting $X$ to the faces $X|_{P}$ contained in $P$ is a subdivision of~$P$.

We think of $T$ at the set of triangles, and use $V_T$ and $E_T$ to denote
vertices and edges in~$T$, respectively.  We say that $T \in \cT(Q)$ is a
\defn{scaled fin} (triangulation) anchored at~$v$, if for every vertex $z\in V_T$,
the triangulation~$T$ is compatible with the line segment from $v$ to~$z$.

A scaled fin without interior vertices is called a \defn{stripe}. We also
consider the stripe associated to a fin $T$: It is the minimal stripe
containing all vertices of $T$.  An interesting case is the one when $T$
is a scaled fin, and $S$ is its stripe.

\begin{lemma}\label{l:fin1}
Let $v\in V$ be a vertex of the polygon $Q\ssu \rr^2$ which is star-shaped at~$v$.
Let $S,T \in \cT(Q)$ be scaled fins of~$Q$ anchored at~$v$, such that $S< T$
and that $S$ is the stripe of~$T$. Then \ts $S \lhd T$.
\end{lemma}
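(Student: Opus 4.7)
The plan is to induct on the number of interior vertices of $T$. The base case (no interior vertices) is immediate: by the minimality of $S$ as the stripe of $T$, the two triangulations share the same vertex set, so $T = S$ and the empty sequence of stellar subdivisions works.

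For the inductive step, I would seek an interior vertex $z$ of $T$ that serves as the \emph{last} stellar subdivision in some valid sequence from $S$ to $T$. Such a $z$ must have $\st_z T$ of one of two canonical forms: either a cone over a single triangle (so $z$ lies in the interior of a $2$-face of a coarser scaled fin $T^-$), or a cone over a $4$-cycle with $z$ collinear with, and strictly between, two opposite vertices of the cycle (so $z$ lies in the interior of an edge of $T^-$). Removing $z$ in the appropriate way yields the candidate $T^- \in \cT(Q)$ with one fewer interior vertex; applying the inductive hypothesis to $T^-$ and then performing the final stellar subdivision at $z$ reconstructs $S \lhd T$.

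The key to locating $z$ is to exploit the stripe structure of $S$ together with the angular geometry at $v$. Since $S$ is a fan of triangles $\Delta_i = v u_i u_{i+1}$ and $T$ refines $S$, each triangle of $T$ lies in a single $\Delta_i$. Within a $\Delta_i$ containing interior vertices of $T$, consider the finitely many rays from $v$ through those vertices, and pick a ray $r$ that is angularly extremal inside $\Delta_i$, adjacent to the bounding ray $vu_i$ with no other interior ray of $T$ in between. The scaled fin condition forbids any vertex of $T$ from lying strictly in the angular wedge bounded by $r$ and $vu_i$, since such a vertex would force an additional ray through it. Taking $z$ to be the vertex of $T$ on $r$ farthest from $v$, the combination of angular isolation and outermost position constrains $\st_z T$ to match one of the canonical forms above, and the removal of $z$ then preserves the scaled fin property (no other vertex of $T$ has $z$ on its segment from $v$, by the outermost choice), the refinement $S < T^-$ (only a local modification is made), and the stripe-minimality of $S$ for $T^-$.

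The principal obstacle is verifying the canonical form in full generality: the wedge may still contain neighbors of $z$ lying on $vu_i$ or on the adjacent portion of $\partial Q$, and one must check that these are consistent with a small link around $z$. The scaled fin condition restricts such potential neighbors by forcing each of their segments from $v$ to be an edge path, but pinning down $\st_z T$ precisely requires a careful local analysis, stratifying by whether the ray $r$ extends past $z$ to a vertex of $T$ on $\partial Q$ (in which case one expects the edge-subdivision form) or not (face-subdivision form). Once this local structure is settled, the induction closes cleanly.
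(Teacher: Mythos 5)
Your strategy runs the induction backwards: instead of building $T$ from $S$ by stellar moves as the paper does (peel one or two triangles of $T$ along $\partial Q$, match each peel by a stellar move in $S$, and induct on a smaller star-shaped polygon), you look for a vertex $z$ of $T$ that could have been the \emph{last} stellar subdivision and delete it. Everything hinges on your claim that the outermost vertex on an angularly extremal ray has the canonical star (link a $3$-cycle with $z$ inside the spanned triangle, or a $4$-cycle with $z$ on the segment between two opposite link vertices). That claim is false. Take $Q=\Delta$ the triangle with $v=(0,0)$, $u_1=(4,0)$, $u_2=(0,4)$, so that $S=\{\Delta\}$, and let $T$ have interior vertices $z=(2,1)$, $w_1=(0.5,1)$, $w_2=(1,2)$ and triangles $vzu_1$, $zu_1u_2$, $zu_2w_2$, $zw_2w_1$, $zw_1v$, $vw_1u_2$, $w_1w_2u_2$. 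This is a scaled fin anchored at $v$ (the segments $vu_1$, $vu_2$, $vz$, $vw_1$, and $vw_1\cup w_1w_2$ are unions of edges), its horizon is the edge $u_1u_2$, so its stripe is $S$, and $S<T$. The ray through $z$ is the extremal ray adjacent to $vu_1$, the wedge between them contains no other vertex, and $z$ is the unique (hence outermost) vertex on its ray; nevertheless the link of $z$ in $T$ is the $5$-cycle $v,u_1,u_2,w_2,w_1$, so $z$ is not removable by an inverse stellar subdivision. Angular isolation on the $vu_1$ side controls nothing on the far side of the ray, which is exactly where $u_2$, $w_2$, $w_1$ all attach to $z$. (After a small outward perturbation of $z$ along its ray it is also the interior vertex farthest from $v$, so the obvious greedy variants of your choice fail too.)

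So the ``careful local analysis'' you defer is not a finishing touch: for your chosen $z$ there is nothing to verify, because the asserted local structure is simply wrong, and the missing idea is how to certify the existence of \emph{any} removable vertex. Note also that a backward induction needs more than the truth of the lemma: the last vertex of some subdivision sequence from $S$ to $T$ is of course removable, but the intermediate complexes of such a sequence need not be scaled fins refining $S$ with stripe $S$, so the existence of a removable vertex whose deletion stays inside your inductive class is essentially the full content of the lemma and cannot be presupposed. The paper avoids this issue altogether by working forwards from the horizon: it finds an edge $uw$ of $T$ with $u\in\partial Q$ separating two boundary triangles, sheds the corresponding triangle (or pair of triangles) of $T$, performs the single stellar move in $S$ that creates the matching edge or vertex, and inducts on the smaller star-shaped polygon; no removable vertex of $T$ is ever needed. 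In the example above the valid sequence subdivides $S$ at $z$ first, then $w_1$, then $w_2$, whereas your rule selects $z$ as the vertex to remove first.
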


\begin{proof}
Use induction on the number $|V_T|$ of vertices in~$T$.
If $T$ has no interior vertices, we have $T=S$ and the result is trivial.
In general, suppose $uw\in E_T$ is an edge of~$T$ such that $u\in \ptl Q$
and $uw$ separates two triangles along the boundary~$\ptl Q$.
By going along the boundary, is easy to see that there exists
at least one such edge~$uw$.
	
There are two cases.  First, suppose $w\in \ptl Q$ and $uw$ separates $Q$ into
a triangle $\De$ and a polygon $Q'=Q\sm \De$.  Make a stellar move in~$S$
by adding the edge $uw$ to obtain a triangulation $S'$ of~$Q'$.  Since
$Q'$ is star-shaped at~$v$, this reduced the problem to triangulations
$S'$ and $T'= T\sm \De$, where $S' < T'$.
	
Second, suppose $w\in \ptl Q$ and $uw$ separates triangles $\De_1=(auw)$ and $\De_2=(buw)$
in~$Q$. Now collapse $\De_1\cup \De_2$\ts, i.e.\ let $Q' = Q\sm (\De_1\cup \De_2)$.
Make a stellar move in~$S$ by adding a vertex $u$ with edges $au$ and~$bu$,
to obtain a triangulation $S'$ of~$Q'$.  Since $Q'$ is star-shaped at~$v$,
this reduced the problem to triangulations $S'$ and $T'= T\sm (\De_1\cup \De_2)$,
where $S' < T'$.
	
\begin{figure}[hbt]
		\includegraphics[width=15.1cm]{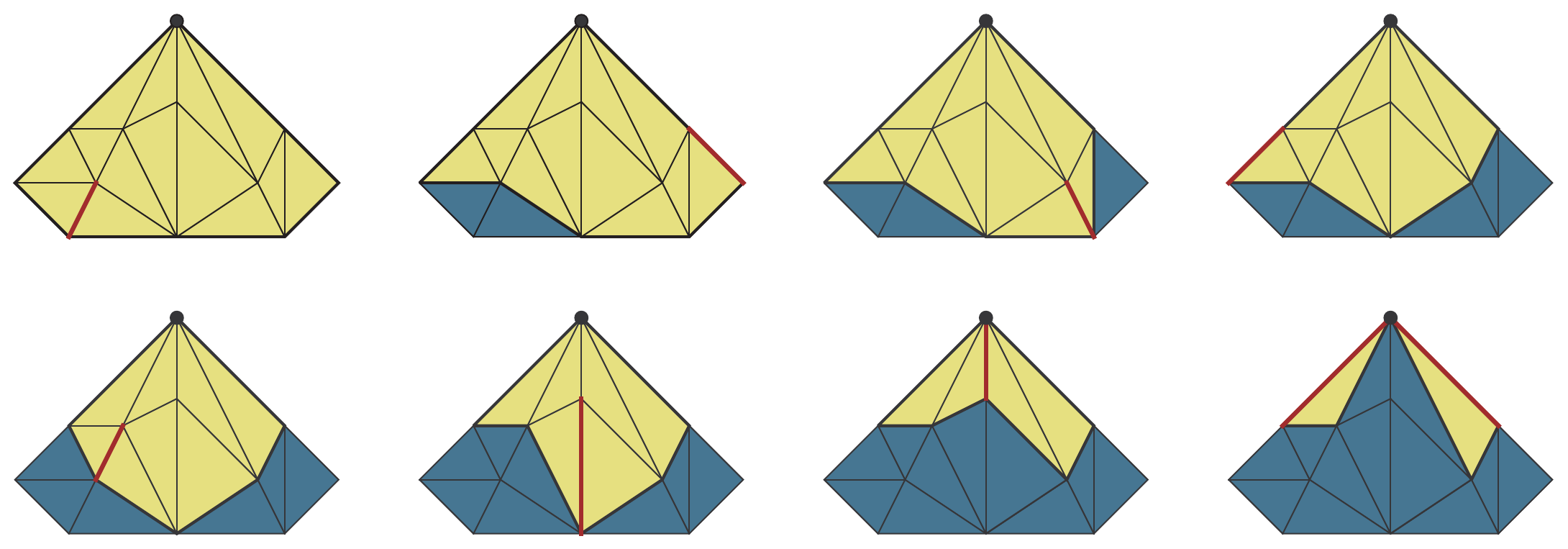}
		\caption{Shedding sequence for a scaled fin of a star-shaped polygon.
			Collapsed edges are shown in red.}
		\label{f:shed1}
\end{figure}
	
Note that in the second case polygon~$Q'$ can become connected in~$v$ only,
see Figure~\ref{f:shed1}.  This does not affect the argument, as one can
treat each component separately and proceed by induction. This completes
the proof.  \end{proof}

\begin{rem}\label{r:fin-number}
	The algorithm in the proof will be called the \defn{shedding routine}.
	Recall that for a triangulation $T$ with $|V_T|=n$ vertices, the number
	of edges $|E_T|\le 3n-6$ and the number of triangles $|T| \le 2n-4$.
	Thus, the number of stellar subdivisions used by the shedding
	routine is at most~$2n$.
\end{rem}

\medskip

\subsection{Common stellar triangulations in the plane}\label{ss:plane-plane}
To construct a common stellar triangulation, follows a series of steps.
Start with triangulations $A, B\in \cT(Q)$  of a convex polygon $Q$ in the plane.
Fix a vertex $v \in V$.  Let $|V_A|=m$ and $|V_B|=n$, so $m,n\ge 3$.

\medskip

\nin
\defng{\bf{Step~1.}}  Use stellar subdivisions in~$A$ to construct a scaled fin
triangulation $A'\rhd A$ that is anchored at~$v$.  Proceed as follows.
For every vertex $u\in (V_A-v)$, let $vw$ be an interval such that
$u\in vw$ and $w \in \ptl Q$. We will add such intervals one by one
in any order, until the desired scaled fin~$A$ is obtained.

\begin{figure}[hbt]
	\includegraphics[width=15.99cm]{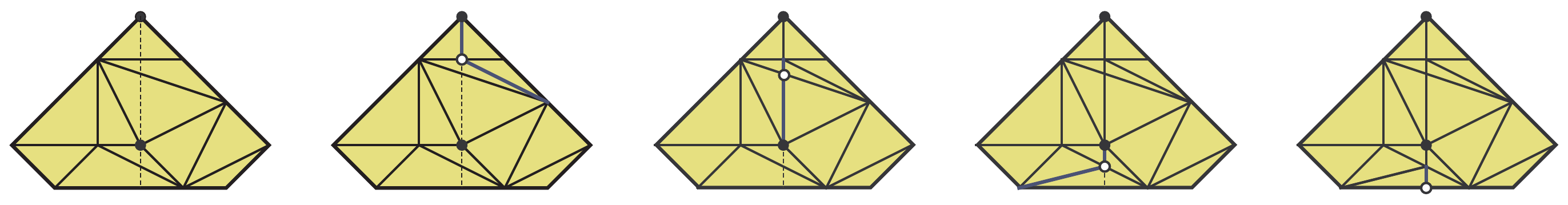}
	\caption{Adding a dotted line in Step~1 using stellar subdivisions.}
	\label{f:fin}
\end{figure}

To add $vw$, note that $vw$ intersects the existing edges $ab\in E_A$.
Make a stellar subdivision at points of intersection $vw \cap ab$.
Do this in the order from~$v$ towards~$w$.  At each
subdivision, the first added edge is along $vw$ while another
may be diverge.  The last of the intervals to be added is along $vw$
adjacent to~$w$, see Figure~\ref{f:fin}.

\medskip

\nin
\defng{\bf{Step~2.}} Use stellar subdivisions in~$B$ to construct a fin
triangulation $B'\rhd B$ that is anchored at~$v$ and refines~$A'$, i.e.\ $A'<B'$.
Proceed as follows.  First, add all vertices $u \in V_{A'}$ one by one, by making
stellar subdivisions at all such~$u$, see Figure~\ref{f:over}.
Then add edges $ab\in E_{A'}$ one by one proceeding from $a$ to~$b$,
and making stellar subdivisions at all intersection points as in Step~1.
A the end, we obtain a refinement $B'$ of~$A'$.

\begin{figure}[hbt]
	\includegraphics[width=11.5cm]{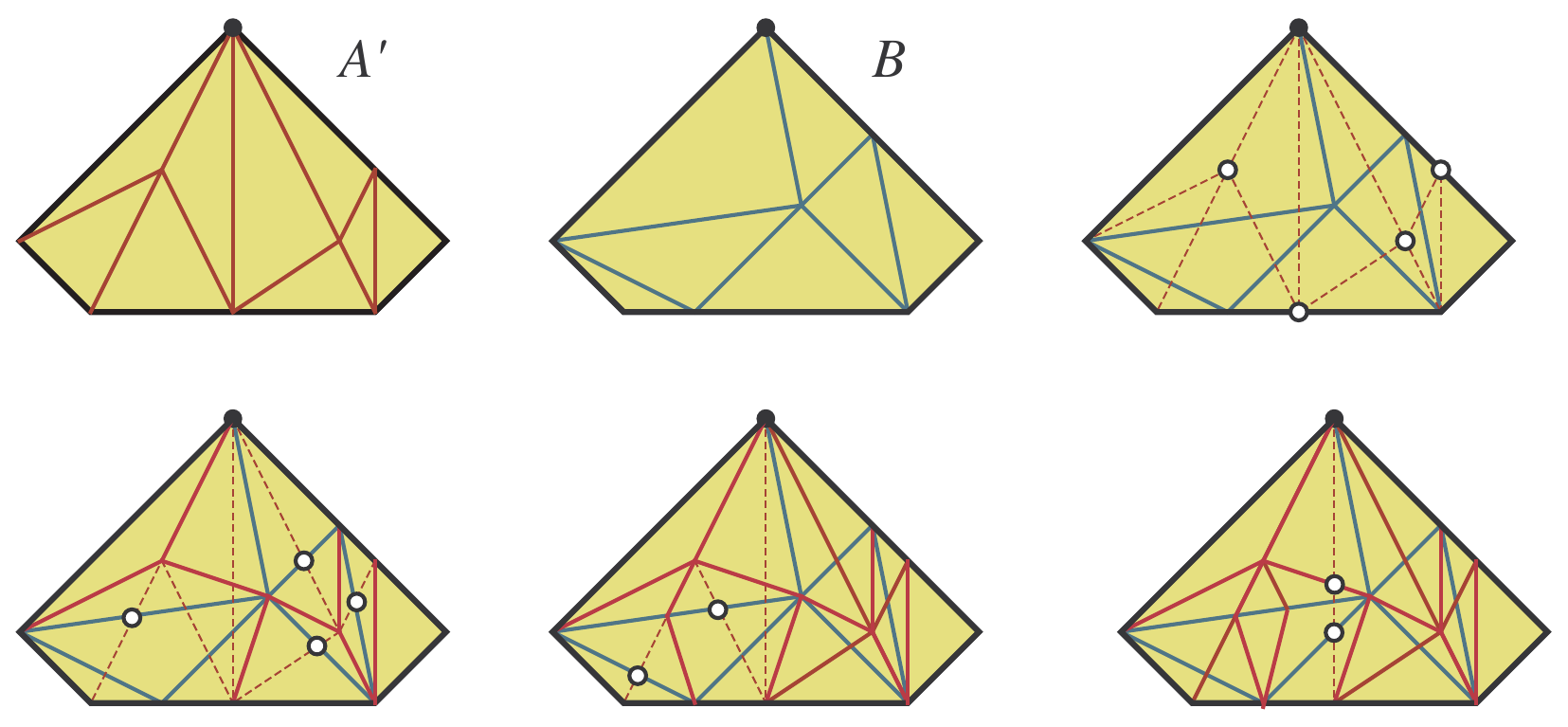}
	\caption{Fin triangulation $A'$, triangulation~$B$, and a sequence
		of stellar subdivisions in Step~2.}
	\label{f:over}
\end{figure}

\medskip

\nin
\defng{\bf{Step~3.}} Compute a \emph{shedding sequence}
$$
Q=Q_0 \, \to \, Q_1 \, \to \, Q_2 \, \to \, \ldots \, \to \, Q_\ell = v
$$
given by the shedding routine as in the proof of Lemma~\ref{l:fin1},
with $T\gets A'$ and $S$ a conic triangulation over vertices in \ts
$\ptl Q \cap V_{A'}$.  Here $D_i := Q_{i-1}\sm Q_i$ is either a triangle
or a union of two triangles obtained by collapsing an edge $u_iw_i$, where
$w_i \in \ptl Q_{i-1}$.

Note that each region $D_i$ is star-shaped at~$u_i$, that $A'$ restricted
to $D_i$ is a stripe fin anchored at~$u_i$, and that $B'$ restricted
to~$D_i$ is a fin anchored at~$u_i$.  For $i=1,2,\ldots, \ell$ in this order,
use Lemma~\ref{l:fin1}, with $S\gets A'|_{D_i}$ and $T\gets B'|_{D_i}$
to obtain a stellar subdivision of $A'$ which coincides with $B'$ on~$D_i$\..

While the stellar subdivision of $D_i$ is constructed by the shedding
routine, some stellar subdivisions will be made for vertices
$z\in D_i \cap Q_i$ on the boundary of both regions.  In these cases,
new edges $zv$ are added to $A'$.  Clearly, the restriction of $A'$
to~$Q_i$ remains a stripe fin anchored at $v$.  Proceed by induction
on~$i$ to obtain~$B'$ as a stellar subdivision of~$A'$.
At the end, we obtain \. $A \lhd A' \lhd B'$ \. and \. $B \lhd B'$,
as desired.

\begin{rem}\label{r:Zeeman}
	Note that Step~2 is a variation on the standard argument which holds in higher
	dimension, see e.g.\ \cite[Lemma~4, p.~8]{Zee63} and \cite{Gla70}:
	
\begin{lemma}
Given $A$ and $B$ simplicial complexes with the same underlying space, 
we can apply stellar subdivisions to $B$ until it refines $A$.
\end{lemma}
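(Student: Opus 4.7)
\emph{Plan.} I would prove the lemma by induction on $\dim A$, iterating the template of Steps~1--2 in Section~\ref{ss:plane-plane} across dimensions. The base case $\dim A = 0$ is immediate since $|A|=|B|$ forces $A = B$. For the inductive step, the plan is to process the faces of $A$ in order of increasing dimension, maintaining the invariant that after the $k$-skeleton of $A$ has been processed, every face of $A$ of dimension at most $k$ is a subcomplex of the current stellar-subdivided $B$.

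The case $k=0$ reproduces Step~1 of the planar argument: for each vertex $v \in V_A \setminus V_B$, the point $v$ lies in the relative interior of a unique simplex of $B$, and stellar subdivision of $B$ at $v$ makes $v$ a vertex. For $k\ge 1$, consider a face $\sigma$ of $A$ whose boundary $\partial \sigma$ is already a subcomplex of $B$. While there exists a simplex $\tau \in B$ with $\tau \not\subset \sigma$ and $\operatorname{int}(\tau) \cap \operatorname{int}(\sigma) \ne \emptyset$, pick a point $p$ in this intersection (for instance the barycenter of the convex polytope $\tau \cap \sigma$) and stellar-subdivide $B$ at $p$. Since $p$ lies strictly inside $\sigma$, any previously processed face $\sigma'$ of $A$ meets $\sigma$ only in $\partial \sigma$, so the carrier of $p$ in $B$ is not contained in $\sigma'$ and the subdivision leaves $\sigma'$ undisturbed. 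Once the $(d{-}1)$-skeleton of $A$ is inserted, each top-dimensional face $\sigma$ of $A$ is automatically a subcomplex of $B$: any cell of $B$ with interior meeting $\operatorname{int}(\sigma)$ would otherwise have to cross $\partial \sigma$, violating disjointness of cell interiors in $B$ since $\partial \sigma$ is already a subcomplex.

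The main obstacle is showing termination of the inner loop. I would argue this by a lexicographic descent on the multiset $\{(\dim \tau,\ \vol(\tau \cap \sigma)) : \tau \in B,\ \tau \not\subset \sigma,\ \operatorname{int}(\tau) \cap \operatorname{int}(\sigma) \ne \emptyset\}$: a stellar subdivision at an interior point of $\tau \cap \sigma$ destroys the entry corresponding to $\tau$, and any new simplex produced that still crosses $\sigma$ has strictly smaller such intersection since $p$ becomes one of its vertices and lies in $\operatorname{int}(\sigma)$. This is the higher-dimensional analogue of the edge-by-edge sweep used in the planar argument, and it explains why stellar subdivisions alone (rather than arbitrary ones) suffice.
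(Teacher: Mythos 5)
The paper does not prove this lemma; it cites it as a classical fact (Zeeman~\cite[Lemma~4, p.~8]{Zee63} and Glaser~\cite{Gla70}), remarking only that Step~2 of the planar algorithm in $\S$\ref{ss:plane-plane} is the two-dimensional instance of the general argument. So there is nothing internal to compare against, and the question is whether your argument stands on its own. Your inductive framework---process faces of $A$ in increasing dimension, maintain the invariant that the processed skeleton of $A$ is a subcomplex of the current $B$, observe that the top-dimensional faces come for free once the codimension-one skeleton is inserted, and note that subdividing at a point of $\operatorname{int}\sigma$ cannot disturb a previously processed face $\sigma'$ because the carrier of such a point is never contained in $\sigma'$---is correct, and it is exactly the structure of the classical proofs and of the paper's planar Steps~1--2.

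The gap is the termination of the inner while-loop, which you yourself flag as the main obstacle. A lexicographic descent on the multiset $\{(\dim\tau,\ \vol(\tau\cap\sigma))\}$ does not establish termination: the second coordinate takes values in a densely ordered set with no descending chain condition, so the multiset order is not well-founded and a strictly decreasing run can be infinite. Worse, with your choice of $p$ the loop genuinely fails to halt. Take $\sigma$ an edge of $A$ whose endpoints are already vertices of $B$, and let $\tau$ be a $2$-simplex of $B$ that $\sigma$ pierces transversally; then $\tau\cap\sigma$ is a segment whose barycenter $p$ lies in $\operatorname{int}\tau\cap\operatorname{int}\sigma$, and after the stellar subdivision at $p$ exactly one of the three new triangles $p*F$ still crosses $\sigma$, with $(p*F)\cap\sigma$ equal to half of $\tau\cap\sigma$. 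Iterating produces a geometric sequence of ever-smaller crossings and never terminates. The classical argument (and the paper's Step~2 in the plane) avoids this by \emph{not} choosing interior barycenters: one subdivides at the finitely many points where $\sigma$ meets the skeleta of $B$, sweeping along $\sigma$, so that each subdivision makes monotone progress (for instance, after $\partial\sigma$ is a subcomplex, each edge of $B$ crossing $\sigma$ does so in a single point, and subdividing there strictly reduces the finite count of such crossings without creating new ones). Your volume descent does not encode this monotonicity, which is where the argument breaks.
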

\end{rem}

\medskip

\subsection{Proof of Theorem~\ref{t:Oda-plane-quant}}\label{ss:plane-quant}
Note that in Step~1, the number of added intervals is at
most~$m$.  They intersect at most $3m-6$ edges, giving the total of at most
$m(3m-6) < 3m^2$ vertices in~$A'$. This step uses at most $3m^2$ stellar
subdivisions.
In Step~2, the number of vertices in~$B'$
satisfies
$$
|V_{B'}| \, \le  \, |V_{B}| + |V_{A'}|  + |E_{A'}| \cdot |E_{B}| \, \le \,
3m^2+n + \big(3(3m^2-6)-6\big)(3n-6) \, \le \,  27\ts m^2n.
$$
Thus, Step~2 uses at most \. $|V_{A'}|+ |E_{A'}| \cdot |E_{B}|\le  27\ts m^2n$ \.
stellar subdivisions from $B$ to~$B'$.

Finally, the number of stellar subdivisions used in Step~3 is at most
\. $|V_{B'}|\le 27 \ts m^2n$.
Summing over Steps~$1$ and~$3$, the number of stellar
subdivisions from $A$ to $B'$ is at most \. $3\ts m^2 +  27 \ts m^2n \le  30\ts m^2 n$.
This completes the proof of Theorem~\ref{t:Oda-plane-quant}.  \qed

\medskip

\section{General algorithm, preparation}

Let us reintroduce some of the notions of the previous section in a more
general form. Since the algorithm involves a rather delicate induction process,
we also separate out those parts that are not part of that inductive process.
Here both $A$ and $B$ that are simplicial complexes realized in $\rr^d$,
triangulating the same polyhedron (or in other words, sharing the same
underlying space).

\subsection{Anchors, fins, stripes and scales}
A polyhedral complex $T$ is \defn{starshaped} if it is starconvex
with respect to~$v$, which is called the \defn{anchor}.
The \defn{horizon} of $T$ is the subset of $T$ defined by points $x$ in~$T$
such that the segment $[x,t]$ lies within $T$, but no line segment strictly
containing $[x,t]$ is contained in~$T$. We call $T$ a \defn{fin} if the
horizon is a subcomplex of $T$ (or equivalently, if the horizon is closed.)
	
We denote this horizon by $\hr_v(T)$. The complex $T$ is called \defn{stripe}
if it coincides with $v\ast \hr_v(T)$, and the latter complex is also called
the \defn{stripe} of~$T$.
A related, and central notion is that of {\em scaled fins}.  A fin $T$ is
\defn{scaled} if the radial projection \. $T\setminus \{v\}\rightarrow \hr_v(T)$ \.
takes every simplex of $T$ that is \emph{not} \. $v$, to a simplex of $\hr_v(T)$.
	
\subsection{Shellings and sheddings}
We say a vertex $w$ in $\hr_v(T)$ is \defn{exposed} if there is an edge $E=E_w$
of $T$ \emph{not} in $\hr_v(T)$ such that $\st_E T = \st_w T$. 
We say that this exposed vertex is \defn{directed}, 
if the edge $E$ is contained in the convex hull of $v$ and $w$.
We say in this case that $T$ has a \defn{shedding} to $T-w$, the maximal subcomplex
of $T$ not containing~$w$. The vertex $w'=E- w$ is also called the
\defn{shedding vertex}.
We say $T$ is \defn{sheddable} if there is a sequence of sheddings such that
the $T$ is reduced to a vertex. A useful example is the following.
	
\begin{ex}
If $T$ is a scaled fin whose stripe is a simplex, then it has a shedding.
\end{ex}

The following observation is useful:

\begin{prop}\label{prp:shedtostellar}
Consider a sheddable scaled fin $T$. Then $T$ is a stellar subdivision of its stripe.
\end{prop}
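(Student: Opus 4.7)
The plan is to prove the proposition by induction on the number of interior vertices of $T$, i.e., on $|V(T) \setminus (\{v\} \cup V(\hr_v(T)))|$. In the base case of zero interior vertices, the scaling property forces every simplex of $T$ distinct from $\{v\}$ to lie in the stripe $S := v \ast \hr_v(T)$: a simplex with $v \notin \sigma$ has all vertices on $\hr_v(T)$ and projects radially to itself, hence lies in $\hr_v(T) \subseteq S$; a simplex containing $v$ is the join of $v$ with such a horizon simplex. So $T = S$ and the claim is trivial.

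For the inductive step, the plan is to exhibit an interior vertex $z$ of $T$ whose star admits the ``prism'' decomposition $\st_z T = z \ast \{v, w\} \ast L$ for some $w \in V(\hr_v(T))$ and some subcomplex $L$ of $T$. This is precisely the condition under which the inverse of the stellar subdivision at $z$ on the edge $[v,w]$ is well defined. Given such $z$, inverting this subdivision produces a scaled fin $T'$ with the same stripe as $T$, with one fewer interior vertex, and still sheddable (its shedding sequence is inherited from that of $T$ by omitting the unique step in which $z$ appears as shedding vertex). The inductive hypothesis then yields $S \lhd T'$, and one concluding stellar subdivision at $z$ on $[v,w]$ recovers $T$, establishing $S \lhd T$.

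To locate $z$, I would traverse the given shedding sequence $T = T_0 \to T_1 \to \cdots \to T_\ell = \{v\}$ and select the largest index $k$ such that the corresponding shedding step has a non-$v$ shedding vertex; set $z := z_k$ and $w := w_k$. Since every later shedding step then has shedding vertex equal to $v$, one can argue that $T_{k+1}$ has no interior vertices, which in turn forces $T_k$ to have $z$ as its unique interior vertex, situated on the radial segment $[v, w]$. The directed shedding condition at step $k$ reads $\st_w T_k = w \ast z \ast \lk_{[w,z]} T_k$, and this together with scaling in $T_k$ and the uniqueness of $z$ as interior vertex pins down the prism decomposition of $\st_z T_k$.

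The main obstacle is to upgrade the prism decomposition from $\st_z T_k$ to $\st_z T$, i.e., to verify that the earlier shedding steps (at indices $i < k$) do not remove any simplex containing $z$. This requires a case analysis: at each such step, the removed star $\st_{w_i} T_i = w_i \ast z_i \ast \lk_{[w_i, z_i]} T_i$ must be shown to contain no simplex through $z$, using the scaling of each intermediate $T_i$ together with the observation that $z$ projects to $w$ rather than to $w_i$ and lies on a distinct radial segment. The hardest part of the plan is precisely this bookkeeping across the shedding sequence; the specific choice of $z$ as the shedding vertex of the latest non-degenerate step is what makes the argument close.
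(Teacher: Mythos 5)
Your inductive strategy, which peels off one interior vertex $z$ at a time by inverting the stellar subdivision at $z$ and takes $z$ to be the shedding vertex of the last non-degenerate shedding step, is the reverse of the paper's one-line proof, which builds $T$ up from the stripe by stellar subdividing at the shedding vertices in their natural order. The base case and the overall induction scheme are sound, and the choice of $z$ is the right one.

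The gap is in the step you yourself flag as hardest: the claim that for $i<k$ the removed star $\st_{w_i} T_i$ contains no simplex through $z$. This is false. Take the planar scaled fin anchored at $v$ with horizon segment $[a,b]$ and two interior vertices $z_1 \in (v,z_2)$, $z_2 \in (v,b)$ on the ray toward $b$, with facets $T = \{\,[v,a,z_1],\,[a,z_1,z_2],\,[a,z_2,b]\,\}$. One shedding sequence sheds $b$ (shedding vertex $z_2$), then $a$ (shedding vertex $z_1$), then $z_2$ (shedding vertex $z_1$), then $z_1$ (shedding vertex $v$); so $z=z_1$ and $w=z_2$. The second step removes $\st_a T_1 = \{[v,a,z_1],[a,z_1,z_2]\}$, every facet of which passes through $z=z_1$. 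The heuristic ``$z$ projects to $w$, not to $w_i$, hence lies on a distinct radial segment'' does not prevent a facet from containing both $z$ and $w_i$. The desired prism decomposition $\st_z T = z \ast \{v,w\} \ast L$ does hold in this example (with $L=\{a\}$), so the overall plan is not doomed; but it must be derived differently --- for instance by showing directly from the scaling property and the fact that $z$ is the unique vertex in the open segment $(v,w)$ that the facets of $T$ through $z$ come in matched pairs $z\ast v\ast\sigma$ and $z\ast w\ast\sigma$ --- rather than by arguing that the earlier sheds avoid $\st_z T$.
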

	
\begin{proof}
Consider the shedding vertices in their natural order, and perform stellar 
subdivisions at these vertices in precisely that order.  
This transforms the stripe into the fin.
\end{proof}	

We now introduce the notion of \emph{semishedding}. 
We say a face $F$ of $\hr_v(T)$ is \defn{exposed} \ts 
if there is a face $F'$ containing $F$ as a codimension one face s.t.\ \ts 
$\st_F T = \st_{F'} T$, and the shedding is directed if $F'-F$ lies in the 
interior of the convex hull of $F$ and $v$, or coincides with~$v$. 
We then say $T$ has a \defn{semishedding} to $T-F$, the maximal subcomplex
of $T$ not containing~$F$. The vertex $w'=F'- F$ is also called the
\defn{semishedding vertex}, and $T$ is \defn{semisheddable} \ts 
if it can be reduced to a vertex using semishedding steps.
We say $T$ is \defn{sheddable} if there is a sequence of sheddings such that
the $T$ is reduced to a vertex.

Recall finally the notion of \defn{shellability} of a simplicial complex~$S$.
Let $F$ be a facet in~$S$ and let $R$ be the complex consisting of
the remaining facets. Suppose $R$ and $F$ intersect in a subcomplex of
$\partial F$ of uniform dimension equalling that of the latter. In our case, 
as we are dealing with simplicial complexes,
this is the neighborhood of a face of $\partial F$.
A transformation from $S$ to~$R$ is called a \defn{shelling step}.
We say that~$S$ is \defn{shellable} if there is a sequence of
shelling steps which reduces $S$ to a single facet. We have the following fact:

\begin{lemma}\label{lem:pullexist}
Consider a subdivision $S$ of the simplex $\Delta$, and any 
generic point $p$ in $\Delta$. Then there exists an iterated 
stellar subdivision of $\Delta$ that is shellable, 
and such that all intermediate complexes are fins 
with respect to~$p$.
\end{lemma}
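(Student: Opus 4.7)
The plan is to build up the desired iterated stellar subdivision in three stages, using induction on the ambient dimension together with the radial structure at the generic point $p$. Along the way, I observe that the ``fin'' condition is automatic throughout: every subdivision of $\Delta$ is starshaped at any interior point $p$, with horizon equal to the induced subdivision of $\partial \Delta$, itself a subcomplex. So the real content of the lemma is the construction of an iterated stellar subdivision refining $S$ that is also shellable.

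First, I perform a single stellar subdivision of $\Delta$ at $p$, producing the stripe $\Delta_1 = p \ast \partial \Delta$, which is a scaled fin at $p$. Next, I apply induction on dimension. For each facet $G$ of $\partial \Delta$, the restriction $S|_G$ is a subdivision of the $(d-1)$-simplex $G$; the inductive hypothesis applied with a generic interior point of $G$ yields a shellable iterated stellar subdivision of $G$ refining $S|_G$. Coordinating these on shared lower-dimensional strata (again by inductively invoking the lemma on those strata), I glue them into a shellable iterated stellar subdivision $(\partial \Delta)'$ of $\partial \Delta$ refining $S|_{\partial \Delta}$. Coning from $p$ yields $p \ast (\partial \Delta)'$, an iterated stellar subdivision of $\Delta_1$; it is a scaled fin at $p$ and is shellable as a cone over a shellable complex.

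To finish refining $S$ in the interior, I handle the interior vertices of $S$ in order. For each interior vertex $v$ of $S$, the ray from $p$ through $v$ hits $\partial \Delta$ at a point $v'$; by genericity of $p$ these $v'$ are distinct and lie in the interior of facets of $(\partial \Delta)'$. I incorporate $v'$ by a stellar subdivision of the boundary, and insert $v$ by stellar subdivision of the radial segment $[p, v']$, keeping the complex a scaled fin. For interior faces of $S$ between interior vertices that are not radially aligned, further stellar subdivisions at transverse intersections with the radial skeleton are required; genericity of $p$ guarantees that these intersections are generic. The resulting scaled fin $T$ refines $S$. Its sheddability follows from the shellability of its horizon $(\partial \Delta)'$ combined with the scaled structure; by Proposition~\ref{prp:shedtostellar}, $T$ is then an iterated stellar subdivision of its stripe $p \ast (\partial \Delta)'$, hence of $\Delta$, and its shellability is obtained by reversing the shedding sequence.

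The main obstacle is the interior refinement stage: arranging the stellar subdivisions so that they both produce a genuine refinement of $S$ and preserve the scaled fin property needed to invoke Proposition~\ref{prp:shedtostellar}. Genericity of $p$ is indispensable, preventing alignments of radial rays with interior faces of $S$ and ensuring all interior-face/radial-segment intersections are transverse. A secondary difficulty is the inductive gluing on $\partial \Delta$, which must be coordinated across shared strata of all dimensions; this is what forces the induction on dimension to be carried out simultaneously for all skeleta of $\partial \Delta$ rather than one facet at a time.
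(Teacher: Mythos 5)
Your opening observation---that the fin condition is automatic because any subdivision of $\Delta$ is star-shaped at any interior point---misreads the lemma. The phrase ``all intermediate complexes are fins with respect to $p$'' refers to the \emph{intermediate complexes of the shelling}, i.e.\ the partial unions of facets built up in shelling order, not the intermediate triangulations along the stellar-subdivision sequence. This is clear from how the lemma is invoked in Step~1 of the Scaling Algorithm (``$T$ is shellable in such a way that the intermediate complexes are strictly convex with respect to $v$'') and from the last line of the paper's proof (``a shelling that is star-convex with respect to the starting point $p$''). A partial union of facets is emphatically not automatically star-shaped at $p$, so the condition you dismiss as vacuous is in fact the substantive part of the conclusion; your construction does not establish it.

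The paper's proof is short and takes a completely different route: it first uses Adiprasito--Izmestiev~\cite{AI15} to replace $S$ by an iterated stellar subdivision $S'$ that is \emph{regular}, lifts $S'$ to the boundary of a convex polyhedron, and then applies the Bruggesser--Mani rocket shelling launched from the lifted $p$; by construction each partial union along such a line shelling is star-convex at $p$. Your inductive cone-and-refine construction, besides missing the star-convexity claim, has further gaps: (a) gluing the inductively obtained shellable subdivisions of the facets of $\partial\Delta$ into a shellable $(\partial\Delta)'$ is asserted but not justified (shellability of a sphere does not follow facet-by-facet, and coordinating the subdivisions across shared strata is nontrivial); (b) inserting the interior vertices and faces of $S$ by ``stellar subdivisions at transverse intersections with the radial skeleton'' while staying a refinement of $S$ and a scaled fin is exactly the difficult content of the paper's Scaling Algorithm (Proposition~\ref{prp:scalefin}) and cannot be waved through in two sentences---in fact it would create a circularity, since the Scaling Algorithm itself relies on Lemma~\ref{lem:pullexist}; and (c) sheddability does not follow from ``shellability of the horizon plus the scaled structure,'' and even granting sheddability, reversing a shedding sequence produces a sequence of stellar subdivisions from the stripe (Proposition~\ref{prp:shedtostellar}), not a shelling---sheddings remove vertex stars, which are generally several facets at once, so this step needs a separate argument.
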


\begin{proof} 	
Recall that after sufficiently many stellar subdivisions, 
the triangulation $S$ becomes \emph{regular} \cite{AI15}, i.e.,  
there is a convex piecewise linear function whose domains of 
linearity are exactly the faces of the subdivision~$S'$ of~$S$. 
In other words, we can lift $S'$ to be the boundary of 
a convex polyhedron.

We now use the following Brugesser--Mani trick in~\cite{BM71}.   
Pick a generic point~$p$ on this lifted surface, and move it 
along a half-line~$\ell$ to infinity away from the surface 
(in the apt imagery of \cite[$\S$8.2]{Zie95},  
``launch a rocket upwards'').  Record the order of hyperplanes 
spanned by the facets of~$S'$ encountered along~$\ell$.  This order, 
when seen on $S'$, gives the desired shelling that is star-convex 
with respect to the starting point~$p$.
\end{proof}

\begin{rem}
Lemma~\ref{lem:pullexist} also follows from \cite[Thm~A]{AB17}, 
which states that the triangulation $S$ of $\Delta$ 
becomes shellable after two barycentric subdivisions.
Note that in $\rr^d$, each barycentric subdivision is a composition
of stellar subdivisions: first in all simplices of dimension~$d$, then in all
simplices of dimension~$(d-1)$, etc.  In fact, it follows from the
proof in \cite{AB17}, that the resulting shellable triangulation~$T$
remains strictly star-shaped at~$p$ throughout the shelling.  
Since this result is not explicitly stated, we include
a simple alternative proof above.  However, if one is interested 
in minimizing the number of stellar subdivisions (see~$\S$\ref{ss:finrem-comp}), 
this approach is substantially more efficient.  
\end{rem}

%

\medskip

\section{General case of the weighted strong factorization theorem} \label{s:higher}

We now finalize the proof of the proof, first making some observations and reductions.
	
\subsection{Preparation: triangulations of simplices}\label{ss:higher-simplices}

For the weighted factorization theorem, we are interested in two geometric 
simplicial complexes with the same underlying space.  For simplicity,  
we can assume that the underlying geometric complex is a simplex.  
Indeed, let $X\ssu \rr^d$ be a geometric complex.  We can assume that $X$ is embedded
into a simplex, possibly of larger dimension.  We now use the following
standard result.

\begin{lemma}[{\rm Bing's extension lemma, \cite[$\S$I.2]{Bing83}}] \label{l:Bing}
Let $X\ssu \De$ is a geometric complex embedded in a simplex.
Then there is a triangulation of $\Delta$ that contains $X$ as a subcomplex.
\end{lemma}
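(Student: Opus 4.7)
My plan is to reduce the problem to two cleaner steps: first build a polyhedral subdivision $P$ of $\Delta$ having every simplex of $X$ as a face, then refine $P$ simplicially without introducing any new vertices.

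For the first step I would construct a finite hyperplane arrangement $\mathcal{H}$ in $\rr^d$ such that $P := \Delta \cap \mathcal{H}$ is a polyhedral subdivision of $\Delta$ in which each $\sigma \in X$ is a face. The arrangement contains the supporting hyperplanes of the facets of $\Delta$, the affine hulls of all $(d-1)$-dimensional simplices of $X$, and, added inductively in order of increasing codimension, for each $\sigma \in X$ of codimension $k \ge 2$, both a family of $k$ generic hyperplanes whose common intersection is $\mathrm{aff}(\sigma)$ and hyperplanes supporting the facets of $\sigma$ within $\mathrm{aff}(\sigma)$, extended generically to $\rr^d$. All choices can be made with coordinates in any fixed infinite field of definition, since genericity is a Zariski-open condition.

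For the second step, I would apply the pulling triangulation to $P$ with respect to an arbitrary linear order on its vertex set: each polytope cell $C$ is triangulated recursively by coning from its smallest-indexed vertex over the pulling triangulations of those facets of $C$ not containing that vertex. This introduces no new vertices, and the classical compatibility property---that the pulling triangulation of $P$ restricts on each face of $P$ to the pulling triangulation of that face---gives consistency across shared boundaries. Since every $\sigma \in X$ is a face of $P$ that is already a simplex, its pulling triangulation is $\sigma$ itself, so $\sigma$ appears intact as a subcomplex of the final triangulation of $\Delta$.

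The main obstacle is the first step: ensuring that low-dimensional simplices of $X$---especially those not contained in any top-dimensional simplex of $X$---emerge as actual faces of $P$ rather than as mere subsets of higher-dimensional cells. The remedy is to process simplices in order of increasing codimension and, for each new $\sigma$, choose the auxiliary hyperplanes in general position with respect to all previously produced flats, so that $\sigma$ is locally exposed as an intersection of supporting halfspaces of the adjacent cells, without subdividing any simplex of $X$ already handled.
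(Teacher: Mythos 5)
The paper gives no proof of this lemma, only citing Bing's book and remarking that the three-dimensional argument there generalizes; so there is no ``paper's proof'' to compare against, and I will assess your proposal on its own.

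The hyperplane-arrangement reduction in your first step does not work, and the obstacle you flag at the end cannot be removed by any choice of genericity. Take $d=2$, let $\Delta$ be a large triangle, and let $X$ consist of the boundary complex of a small triangle $v_1v_2v_3$ in the interior of $\Delta$ together with one extra isolated vertex $w$ in the interior of that small triangle, with $w,v_1,v_2,v_3$ in general position. For $w$ to be a $0$-face of $P=\Delta\cap\mathcal{H}$ there must be at least two lines of $\mathcal{H}$ through $w$. Any line through the interior point $w$ exits the small triangle at two boundary points; if either exit point lies in the relative interior of an edge $v_iv_j$, that edge is cut by the line and ceases to be a face of $P$. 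So both exit points would have to be vertices, forcing the line to pass through two of $v_1,v_2,v_3$ as well as $w$ --- impossible in general position even for a single such line, let alone two distinct ones. Hence \emph{no} hyperplane arrangement, generic or not, can carve out every simplex of this $X$ as a face of $\Delta\cap\mathcal{H}$, and the claim that a suitably generic inductive choice ``locally exposes $\sigma$ without subdividing any simplex already handled'' is false in general. Note that the correct triangulation in this example (cone the small triangle from $w$, then fill in the annulus to $\partial\Delta$) has edges $wv_i$ that are not cut out by an arrangement whose lines avoid the relative interiors of the $v_iv_j$; the obstruction is intrinsic to the global-arrangement strategy.

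Your second step is sound: pulling triangulations introduce no new vertices, restrict compatibly to faces, and fix any face that is already a simplex, so it would correctly finish the argument \emph{if} Step~1 produced the desired $P$. But Step~1 is where the content of Bing's lemma lies, and a single global hyperplane arrangement is too rigid for it. What is needed instead is a local, order-dependent extension --- building the triangulation outward from $|X|$ region by region (e.g.\ by coning or collaring into the complementary chambers), matching along the shared boundary --- precisely because, as the example shows, higher-dimensional simplices of $X$ can angularly surround a lower-dimensional one, making the hyperplanes you would need impossible.
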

	
Let us remark that Bing only states this lemma for $3$-dimensional complexes,
but his proof works in general.  From this point on, we start with two triangulations 
of the simplex, and prove that they do, in fact, have a common stellar 
subdivisions. 

\bigskip
	
\subsection{Stripes and scales: Scaling Algorithm}\label{ss:higher-scales}
In this section we present an algorithm that scales a fin. It is one of the
key issues that is more difficult in higher dimensions compared to the
planar case, though the algorithm also works in the planar case.
Formally, we prove the following technical result:
	
\begin{prop}\label{prp:scalefin}
Let $T\ssu \rr^d$ be a triangulation of a $d$-simplex $\De$, and let $v$ be a generic interior point of $\De$.
Then $T$ has an iterated stellar subdivision $T'$ that is also a scaled fin anchored at~$v$.
%
\. Moreover, we can choose $T'$ so that it has a shedding with respect to that anchor.
\end{prop}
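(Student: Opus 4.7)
The plan is to construct $T'$ by refining $T$ until it is compatible with the radial cone structure emanating from $v$, so that $T'$ becomes a scaled fin that is moreover realized as an iterated stellar subdivision of its own stripe; the required shedding is then obtained by reversing the radial stellar subdivisions.

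\textbf{Setup and scaling.} If $v \notin V_T$, I first stellar-subdivide at $v$, which by genericity lies in the interior of a unique top-dimensional simplex of $T$. Write $H := T|_{\partial \Delta}$ for the horizon and $C := v \ast H$ for the simplicial stripe triangulation of $\Delta$. To inject the radial structure of $C$ into $T$, I proceed in stages. For each interior vertex $w$ of $T$, I add $\pi(w) \in \partial \Delta$ as a horizon vertex via a boundary stellar subdivision, and then stellar-subdivide along the radial segment from $v$ to $\pi(w)$ (which contains $w$) at each of its intersections with the current triangulation, in the manner of Step~1 of $\S$\ref{ss:plane-plane}. I then iterate over higher-dimensional interior faces, coning along $v \ast F$ for each interior face $F$ and invoking the Zeeman-type refinement lemma (Remark~\ref{r:Zeeman}) whenever the current triangulation fails to refine the updated cone complex. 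After finitely many iterations the process terminates, and the resulting $T'$ has every simplex contained in a radial cone over a horizon face, with radial projection equal to a face of $T'|_{\partial \Delta}$; hence $T'$ is a scaled fin.

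\textbf{Shedding.} By construction, every interior vertex of $T'$ lies on a radial segment from $v$ to a horizon vertex, and every added horizon vertex arose as $\pi(w)$ for some prior vertex $w$. Consequently, $T'$ is also an iterated stellar subdivision of its own stripe $v \ast (T'|_{\partial\Delta})$: after reordering the subdivisions so that horizon refinements are performed first (producing the refined stripe), the remaining subdivisions are precisely radial interior stellar steps. Reversing those radial steps yields a shedding of $T'$ down to the stripe: each reverse step removes an interior vertex $w' \in (v, w)$, whereupon the horizon vertex $w$ becomes exposed via the directed edge $[w, w']$, exactly matching the definition of a directed shedding step. The stripe is then trivially sheddable one horizon simplex at a time, giving the full shedding.

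\textbf{Main obstacle.} The delicate point is that each horizon refinement alters the cone complex to which the interior must be aligned, threatening an inductive loop. The resolution is that $T$ has only finitely many vertices, so only finitely many projections $\pi(w)$ need to be introduced; once they are all present, the cone complex stabilizes and the interior alignment reduces to a single Zeeman-style refinement. A secondary subtlety is verifying that the reversed radial subdivisions yield \emph{directed} sheddings in the strict sense: this requires every added interior vertex to lie strictly inside its radial segment $(v, \pi(w))$, which is automatic from the genericity of $v$ together with a standard small-perturbation argument on the orders in which intersection points are added.
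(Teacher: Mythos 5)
The paper's proof (the ``Scaling Algorithm'' in $\S$5.2) takes a genuinely different route from yours. It first replaces $T$ by a shellable refinement (Lemma~\ref{lem:pullexist}), then processes the $d$-simplices of $T$ one at a time in shelling order, maintaining a weaker invariant called \emph{upward scaling} via carefully designed ``upper stellations,'' and only at the very end upgrades upward scaling to scaling (Step~3). The shelling order is what makes the induction close; it gives a controlled way to add one facet at a time without destroying the scaling of what has already been processed. Your proposal instead tries to impose the radial cone structure all at once by adding the projections of interior vertices and then aligning with the cone complex $v\ast H$.

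There are two genuine gaps in your argument. First, termination is not established. You claim that ``once all projections $\pi(w)$ of the original vertices are present, the cone complex stabilizes and the interior alignment reduces to a single Zeeman-style refinement.'' But the Zeeman-type refinement of $T$ against $C = v \ast H$ introduces new interior vertices (at intersections of the old simplices of $T$ with the cone hyperplanes of $C$), and there is no reason those new vertices should already lie on radial segments through horizon vertices. Their radial projections then need to be added to the horizon, which changes $H$, which changes $C$, which requires further refinement; you do not show this loop closes. Second, refining $C$ is strictly weaker than being a \emph{scaled} fin: if $\sigma$ is a $(d-1)$-simplex of $T$ lying inside a cone $v\ast F$ but not containing $v$, its radial projection is some $(d-1)$-dimensional subset of $F$, and scaling demands that this subset be exactly $F$ (or a proper face of $F$). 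Refinement of $C$ says nothing about this. This is precisely why the paper introduces the intermediate notion of \emph{upward scaling} and proves a separate lemma (in Step~3) upgrading it to genuine scaling; your argument has no analogue of that step and simply asserts the conclusion.

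A smaller but real issue is the shedding claim. Sheddings, as defined in $\S$4.2, remove \emph{horizon} vertices $w$ (passing to $T - w$), while the radial stellar subdivisions you reverse remove \emph{interior} vertices $w'$. These are dual in the way Proposition~\ref{prp:shedtostellar} suggests, but the correspondence requires the exposure condition $\st_E T = \st_w T$ with $E = [w,w']$ to hold at every stage, and this depends on the order in which radial vertices were introduced being compatible with some global shedding order across all cones $v\ast F$ simultaneously, not just within a single ray. You assert this by a ``small-perturbation argument'' without making it precise, whereas the paper tracks sheddability (via semisheddability and ray-centric subdivisions) as an inductive invariant throughout the Scaling Algorithm.
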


In here and what follows, the genericity of $v$ is the one that guaranteed to exist by 
Lemma~\ref{lem:pullexist}.
Let us introduce an important notion in form of a lemma.
	
\begin{lemma}[{\rm Refining scalings and ray-centric subdivisions}{}]
\label{lem:raycenter}
If $T$ is a scaled fin with anchor $v$, and $H=\hr_v(T)$ its horizon,
and if furthermore $H'$ is any stellar subdivision of~$H$,
then some stellar subdivision $T'$ of $T$ has horizon~$H'$.
%
%
\. Moreover, if \ts $T$ has a shedding, then $T'$ can be chosen to have a shedding as well.
\end{lemma}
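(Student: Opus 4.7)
The plan is to lift the stellar subdivision of $H$ to $T$ along the rays from the anchor $v$. First I would reduce to the case where $H'$ is obtained from $H$ by a single elementary stellar subdivision, adding a vertex $z$ in the relative interior of a face $F \in H$; the general case then follows by iterating the construction one stellar move at a time, with the ``moreover'' clause preserved step by step.

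For the elementary case, let $R$ be the ray from $v$ through $z$. By the scaled-fin hypothesis, the radial projection $\pi \colon T \setminus \{v\} \to H$ sends each simplex of $T$ (other than $\{v\}$) to a simplex of $H$ of equal dimension. Hence the simplices of $T$ that do not contain $v$ and whose radial projection equals exactly $F$ form a finite chain $\tau_1, \ldots, \tau_m = F$, ordered by distance from $v$, with $R$ meeting each $\tau_i$ in a single interior point $p_i$ and $p_m = z$. I would take $T'$ to be the complex obtained by performing stellar subdivisions of $T$ at $p_1, \ldots, p_m$. A short scaled-fin argument shows that any simplex of $T$ containing two distinct $\tau_i$'s would force them to coincide (since $\pi$ restricts to a bijection on any simplex of $T$ not containing $v$), so the stars of distinct $\tau_i$ and $\tau_j$ are disjoint in $T$ and these stellar moves commute. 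Verifying that $T'$ is a scaled fin with horizon $H'$ is then direct: only $\tau_m = F$ lies on $\hr_v(T)$, so only the subdivision at $z$ affects the horizon, and this produces precisely $H'$; scalability persists because each new simplex of $T'$ has the form $p_i \ast \rho$ with $\rho \in \partial\tau_i \ast \lk_{\tau_i}(T)$, whose radial image $z \ast \pi(\rho)$ lies in $H'$.

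For the moreover claim, I would produce a shedding of $T'$ by peeling off the new vertices $p_m, p_{m-1}, \ldots, p_1$ along the ray $R$ from outside in, and then continuing with the hypothesized shedding of $T$. The vertex $z = p_m$ is exposed in $T'$: the scaled structure of $T$ near $F$, combined with the first shedding step of $T$ that acts on a vertex of $F$, supplies a vertex $w \in \lk_F(T)$ such that $[z, w]$ is an edge of $T'$ with $\st_{[z,w]} T' = \st_z T'$. Shedding $z$ promotes $p_{m-1}$ to the horizon with an analogous exposed edge one layer inward, and iterating along $R$ sheds $p_{m-1}, \ldots, p_1$ in order; the remaining complex then admits the hypothesized shedding of $T$ to reduce to $\{v\}$.

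The main obstacle is the exposed-edge verification at each of these inward shedding steps: one must check, layer by layer, that the star of $p_i$ coincides with the star of the appropriate ray-aligned edge $[p_i, w_i]$, even though the outer stellar moves have altered the local combinatorics near $R$, and that the partner vertices $w_i$ can be supplied consistently from the scaled-fin structure together with the hypothesized shedding of $T$. This is a local combinatorial analysis around $R$ combining the scalability of $T$ with the shedding data, which is delicate but essentially a bookkeeping exercise.
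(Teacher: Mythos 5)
Your overall plan---reduce to a single stellar subdivision of $H$, then stellar-subdivide $T$ at the points where the segment $[v,z]$ crosses faces of $T$ transversally---is exactly the paper's ray-centric subdivision. But your justification has a concrete error in the commutativity step, and this matters. You claim that no simplex of $T$ contains two distinct $\tau_i$'s, supported by the assertion that the radial projection $\pi$ restricts to a bijection on any simplex of $T$ not containing $v$. The latter is false: a full-dimensional simplex $\sigma$ of $T$ not containing $v$ has $\pi(\sigma)$ equal to a face of the lower-dimensional horizon $H$, so $\pi|_\sigma$ collapses dimension and cannot be injective. Concretely, the segment $[v,z]$ typically passes through the interior of such a $\sigma$, entering through one facet $\tau_{i-1}$ and exiting through another $\tau_i$; then $\sigma \supseteq \tau_{i-1}, \tau_i$, their stars overlap (both contain $\sigma$), and the two stellar subdivisions do not commute (they produce the two different diagonals in the subdivided star of $\sigma$). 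Already a $2$-dimensional scaled fin with one interior vertex off the ray gives a counterexample. Consequently your inside-out order $p_1,\dots,p_m$ and the paper's outside-in order (from $p$ toward $v$) yield genuinely different complexes, and your scalability check via the formula $p_i * \rho$ with $\rho \in \partial\tau_i * \lk_{\tau_i}(T)$---where the link is taken in the \emph{original} $T$---is only valid for the first subdivision in the chain; the later links have been modified by the earlier moves.

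The sheddability argument also breaks where you flagged the difficulty. You propose to shed the new vertices $p_m, p_{m-1},\dots, p_1$ in a block before touching the old shedding of $T$. But after shedding $z = p_m$, the vertex $p_{m-1}$ is in general \emph{not} exposed: it is an interior vertex of the scaled fin, its link is a full sphere of codimension one, and its star contains more facets than the star of any single edge through it. (In the $2$-dimensional example above, after shedding $z$ the star of $p_{m-1}$ has three triangles, so it cannot equal the star of an edge.) The shedding of $T'$ must \emph{interleave} the new vertices with the old shedding order of $T$, not peel them off monotonically along $R$. Calling this ``essentially a bookkeeping exercise'' undersells it: as written, the shedding order you propose is incorrect, not merely underspecified. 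The paper's proof is terse here too (``observe that this process preserves sheddability''), but it at least uses the outside-in order, in which the new vertex on the horizon can be reached by the interleaved shedding without the block-removal claim.
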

	
\begin{proof}
We may assume that $H'$ is obtained from $H$ by a single stellar subdivision,
introducing a vertex $p$ to $H$. Consider the line segment~$pv$.
Consider the faces of $T$ it intersects transversally, that is, 
in a set of dimension~$0$, and order them from $p$ to~$v$. 
Perform stellar subdivisions at these points in this order and 
observe that this process preserves sheddability.
\end{proof}

The proof above defines a subdivision which we call the
\defn{ray-centric subdivision} of~$T$ at the segment~$pv$.

\smallskip
	
Let us now introduce the following notion of \defn{partial scalings}.
A subcomplex $T$ of $\De$ with anchor $v$ is \defn{scaled}
if the radial projection $\varrho$ of $T-v$ to $\De-v$ has
the property that if the relative interiors of $\varrho(\sigma)$
and $\varrho(\tau)$ intersect for any two faces $\sigma$ and $\tau$ of $T$,
then they coincide.
	
Equally useful is the notion of the \defn{upward scaling} of $T$:
If in the above setting, $\varrho(\sigma)$ and $\varrho(\tau)$
have intersecting relative interiors, then $\sigma$ is
in the convex hull of $\{v\}$ and $\tau$ or vice versa.

The notions of sheddings and semisheddings extend as follows.  
We call $T$ a \defn{halfstar} with anchor $v$ if every line through~$v$ 
intersects $T$ in a convex subset, and horizon is the collection of points 
in these sets furthest away from $v$. The \defn{shore} is on the other hand 
those points closest to $v$. We call $T$ a \defn{halffin} if both sets are closed. 
We call a halffin $T$ \defn{sheddable} (resp., \defn{semisheddable}) if shore 
and horizon coincide, or there exists a sheddable (resp., semisheddable) face 
in the horizon and its removal results in a sheddable (resp., semisheddable) 
complex.

\smallskip

We now present an algorithm that proves Proposition~\ref{prp:scalefin}.
	
\medskip

\begin{algSc} \,

\smallskip
\nin
{\em \underline{Input}:}
A triangulation $T$ of a simplex $\Delta$ and generic interior point $v$.

\smallskip

\nin
{\em \underline{Output}:}
A striped stellar subdivision of $T$.
\end{algSc}

\medskip

\nin
\defng{\bf{Step~1.}} As observed in the proof of Lemma~\ref{lem:pullexist}
can assume that $T$ is shellable in such a way that the intermediate
complexes are strictly convex with respect to~$v$. In particular, all simplices are in general position with respect to $p$: a simplex of positive codimension does not contain $v$ in its affine hull.
	
\medskip

\nin	
\defnb{\bf{Subroutine: Upper subdivision of a simplex.}} \.
Consider a simplex $d$-simplex $\sigma$ contained in the $d$-simplex $\De$,
but not intersecting $v$. It is not upward scaled usually, but we can force this easily:
	
The part $L_\sigma$ of $\partial \sigma$ facing $\De$ (the light side illuminated
by the light source~$v$), and the part $D_\sigma$ facing away, project to the same
set in $\De-v$ along $\varrho$. The common subdivision of those two images contains
a unique vertex $s$ that is not a vertex of $L_\sigma$
(if not, $\sigma$ is already upward scaled).

Consider the preimage $s'$ of $s$ in $L_\sigma$. Perform a stellar subdivision
of $\sigma$ at $s$. We call this the \defn{upper stellation} of $\sigma$
at the \defn{upper center} $s$.  \hfill \defnb{{$\mathbf{\blacksquare}$}}

\begin{figure}[h!bt]
\includegraphics[width=12.1cm]{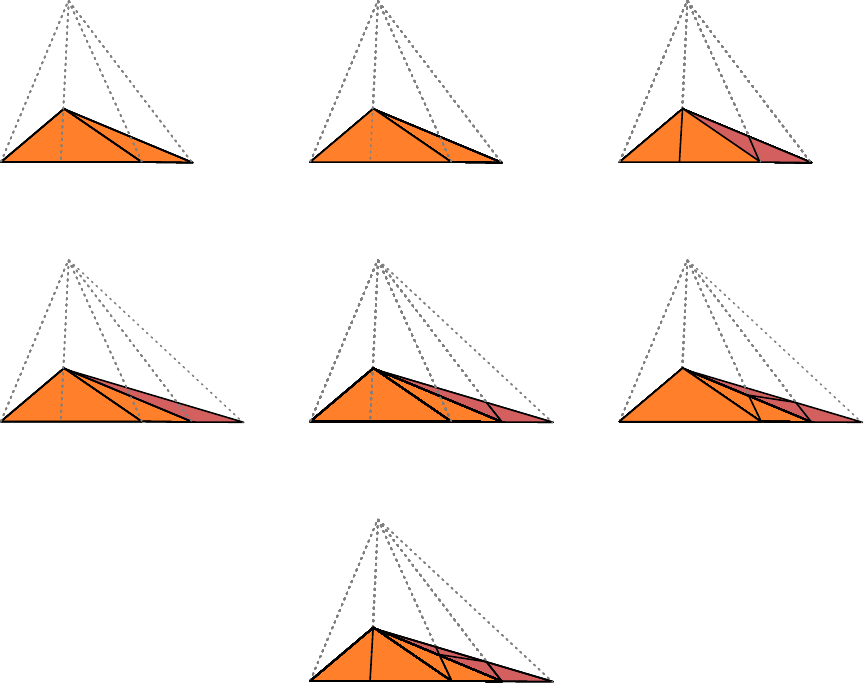}
\caption{Illustration of Step~2 of the scaling algorithm. We start with an initial subcomplex, and record the stellar subdivisions. We then add another simplex, upward stellate, and then perform the stellar subdivisions for the old complex, keeping upper scaling in the new facet as we do so. The result is upward scaled.}
\label{f:shed2}
\end{figure}

If $\sigma$ intersects $v$, then it contains $v$, and the upward stellar subdivision is simply the stellar subdivision at $v$.

\medskip

\nin
\defng{\bf{Step~2.}} We iterate over~$i$.  Let $T_i$ denote the complex of the
first~$i$ facets in the shelling of~$T$. Assume that we already know,
by induction on~$i$, how to find stellar subdivisions to make $T_i$ scaled and sheddable, turning it into a new complex $T_i'$.
Record the stellar subdivision steps in a list~$\mathcal{S}_i$\ts.
We now find a new series of subdivision steps to make $T_{i+1}$ scaled as follows.	


Let $\sigma$ denote the next facet in the shelling.
First, perform an upper stellation of~$\sigma$.  Next, perform the
subdivision steps in the list $\mathcal{S}_i$, one by one, applied to $T_{i+1}$. We examine the steps one by one, injecting more stellar subdivisions if needed:

If the subdivision is in $\sigma$, then this may introduce simplices in $\sigma$
that are not upward scaled. Pick a simplex~$\tau$ that is no longer upward scaled.
Let $s_\tau$ be the upper center, and perform a ray-centric subdivision with
respect to the segment $sv$.  Otherwise, if the subdivision is not in~$\sigma$, do nothing, that is, proceed to examining the next element of $\mathcal{S}_i$.

Repeat this for all simplices~$\sigma$ whose upward scaling is now violated.
The new triangulation is upward scaled: restricted to the underlying set of $T_i$, it coincides with $T_i'$.

Observe in addition that it is semisheddable if $T_i'$ was: We can remove using shedding steps until we reach the subdivision of the new facet $\sigma$, which we deformed using an upper stellation, and upper stellations in general position are semisheddable. After this, we performed ray-centric subdivisions, which preserve semisheddability.

\medskip

\nin
\defng{\bf{Step~3.}} We now make the following observation:
	
\begin{lemma}
	Any upward scaled triangulation has a scaled stellar subdivision (with the same stripe). If the complex was semisheddable, the resulting complex can be chosen to be sheddable.
\end{lemma}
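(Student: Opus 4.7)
The plan is to refine $T$ by a sequence of ray-centric stellar subdivisions, in the spirit of Lemma~\ref{lem:raycenter}, until every simplex of $T$ projects radially onto a face of the horizon $H := \hr_v(T)$, while leaving $H$ itself combinatorially unchanged. Since each such step only inserts new vertices on some open segment $(p,v)$ with $p$ a vertex of $H$, the subcomplex $v \ast H$ is preserved, and hence the stripe of the resulting complex coincides with that of~$T$.

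First I would verify that the upward scaling hypothesis confines the shadow $\varrho(\sigma)$ of every simplex $\sigma \ne v$ to a single maximal face $F$ of~$H$: applied to $\sigma$ and any maximal horizon face $F$ whose shadow meets $\varrho(\sigma)$, the upward scaling dichotomy rules out $F \subset \conv(\{v\}\cup\sigma)$ (since nothing lies beyond the horizon along a ray from~$v$) and so forces $\sigma \subset v \ast F$; if $\varrho(\sigma)$ met the relative interiors of two distinct maximal horizon faces, the same dichotomy would place $\sigma$ in their common cone, contradicting the overlap. The only obstruction to $T$ being scaled is then that $\varrho(\sigma)$ may be a proper subset of such an $F$ which fails to be a face of~$F$. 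To eliminate this, I would enumerate the vertices $p_1,\ldots,p_k$ of $H$ and, for each $p_i$ in turn, perform stellar subdivisions at every point where the open segment $(p_i,v)$ transversally crosses the relative interior of a simplex of the current complex, processed in order from $p_i$ toward~$v$; this is exactly the operation analyzed in Lemma~\ref{lem:raycenter}, now applied with $H' = H$ and $p_i$ treated as a vertex we are ``re-adding''. After all horizon vertices have been processed, the vertex set of any simplex $\sigma$ projects onto precisely those horizon vertices $p_i$ whose ray $[p_i,v]$ meets $\sigma$; by convexity $\varrho(\sigma)$ is then the face of $H$ they span, so the complex is scaled.

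For the sheddability addendum, the second clause of Lemma~\ref{lem:raycenter} ensures that each individual ray-centric step can be chosen to preserve a shedding, and composing these steps yields a shedding for the final scaled complex. The main technical point I expect to be hardest is checking that upward scaling persists after each intermediate ray-centric subdivision, so that the shadow analysis of the second paragraph remains valid throughout the induction on $i$; this should follow from the fact that ray-centric subdivisions split simplices only along rays through $v$, a transformation that manifestly respects the stacking-along-rays structure characterizing upward scaling.
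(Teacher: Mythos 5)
There is a genuine gap in the central step of your argument. You claim that after performing ray-centric subdivisions along the segments $[p_i,v]$ for every horizon vertex $p_i$, the complex becomes scaled, on the grounds that ``the vertex set of any simplex $\sigma$ projects onto precisely those horizon vertices $p_i$ whose ray $[p_i,v]$ meets~$\sigma$.'' This is false. Ray-centric subdivisions along horizon-vertex segments insert new vertices only on those segments; they do nothing about an interior vertex $q$ of $T$ whose radial projection $\varrho(q)$ lies in the relative interior of a horizon face, and such vertices are perfectly compatible with upward scaling. Concretely, let $\Delta=\conv\{v,a,b\}$ be a triangle in the plane, let $q$ be a generic interior point of $\Delta$, and let $T$ consist of the three triangles $vaq$, $vqb$, $abq$. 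Then $T$ is a fin with horizon $H=ab$; checking the definition pair by pair shows $T$ is upward scaled (every shadow lies inside the single horizon face $ab$, and in each overlapping pair one simplex lies in the cone over the other), yet $T$ is not scaled, since $\varrho(q)$ is an interior point of the edge $ab$. The segments $[a,v]$ and $[b,v]$ are already edges of $T$, so your procedure terminates immediately with $T$ unchanged.

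The missing idea, and the one the paper actually uses, is to subdivide at the radial projections of the offending \emph{non-horizon} vertices. The paper's proof locates a maximal unscaled simplex $\sigma$ such that no simplex strictly contained in $\conv(\{v\}\cup\sigma)$ is also unscaled, finds a vertex in the relative interior of the light side $L_\sigma$ whose radial projection $z$ to the dark side $D_\sigma$ is not already a vertex of $D_\sigma$, and stellarly subdivides $T$ at that point $z$; this preserves upward scaling and makes the restriction to $\sigma$ scaled, so iterating terminates. In the example above this is a single stellar subdivision at $z=\varrho(q)\in ab$. Note that this step does in general subdivide the horizon $H$, so your stated aim of ``leaving $H$ itself combinatorially unchanged'' must be relaxed: the lemma's parenthetical about preserving the stripe should be read geometrically (same underlying set $v\ast H$) rather than combinatorially.
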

	
\begin{proof}
Consider a maximal simplex $\sigma$ of a simplicial complex $T$ in~$\De$
that is \emph{not} scaled, but such that there is no simplex of~$T$
strictly contained in the convex hull of $v$ and $\sigma$ with the same property.
Then there is a vertex in the relative interior of $L_\sigma$ whose
radial projection~$z$ to $D_\sigma$ is not a vertex of the latter.
Perform a stellar subdivision of $T$ at~$z$. The result is still
upward scaled, but the restriction to $\sigma$ is now scaled as well.
Repeat until a scaled stellar subdivision is obtained.
\end{proof}

We use the algorithm in the proof of this lemma to turn the upward scaling into a scaling.
This gives subdivision steps to scale $T_{i+1}$\ts. Repeat Steps~2 and~3 until $T$ is scaled.
This finishes the description of the scaling algorithm and proves Proposition~\ref{prp:scalefin}.  \hfill \defng{{$\mathbf{\blacksquare}$}}

\begin{figure}[hbt]
		\includegraphics[width=12.1cm]{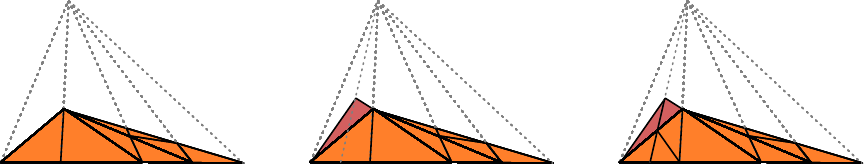}
		\caption{Step~3 of the scaling algorithm turns an upward scaling into a scaling.}
		\label{f:shed3}
\end{figure}	
	
\smallskip

\subsection{Common stellar subdivisions in the simplex: Injection algorithm}
We now can finalize the proof of the weighted strong factorization theorem (Theorem~\ref{t:Oda-space}). We provide the following algorithm

\smallskip

\begin{algFS}
	\ {\em \underline{Input}:}
	$A$ is a shellable simplicial complex of dimension $d$, and $B$ is a refinement of $A$. \\
	{\em \underline{Output}:}
	A common iterated stellar subdivision of $A$ and~$B$.
\end{algFS}

\nin
\defng{\bf{Description of the  FinStar Algorithm.}} \.
By Lemma~\ref{lem:pullexist}, we can assume that $A$ is subdivided to be shellable. Moreover, we may assume that $B$ refines $A$.

Now, order the facets $F_i$ one by one, in their shelling order. Let moreover $v_i$ be generic interior points in each $F_i$.

\medskip

\nin
\defng{\bf{Step~1.}}
Perform stellar subdivisions in $A$ at the points $v_i$.

\medskip

\nin
\defng{\bf{Step~2.}}
Pick the largest $i$ such that when restricted to the complex $T_i$ of the first $i$ facets in the shelling order, the triangulations $A'$  and $B'$ coincide.

Consider the facet $F_{i+1}$. Restricted to this facet, $A'$ is a stripe. Use the scaling algorithm to make $B'|_{F_{i+1}}$ a sheddable, scaled fin with anchor $v_i$.

Consider now the horizons $\hr_{v_{i+1}} A'|_{F_{i+1}}$ and $\hr_{v_{i+1}} B'|_{F_{i+1}}.$ By induction on the dimension, they have a common stellar subdivision. Hence, we can apply Lemma~\ref{lem:raycenter} and apply stellar subdivisions until $A'|_{F_{i+1}}$ is the stripe of the scaled fin $B'|_{F_{i+1}}$.

\medskip

\nin
\defng{\bf{Step~3.}}
Use Proposition~\ref{prp:shedtostellar}, applied to $B'|_{F_{i+1}}$, to find a stellar subdivision of $A'|_{F_{i+1}}$ that coincides with $B'$. Now, the triangulations $A'$ and $B'$ coincide on $T_{i+1}$.  Return to Step~2 and repeat until a common subdivision it obtained. \hfill \defng{{$\mathbf{\blacksquare}$}}

\smallskip

\begin{proof}[Proof of Theorem~\ref{t:Oda-space}]
Now, recall we may assume that $A$ and $B$ refine a simplex (by Lemma~\ref{l:Bing}), and that $A$ is shellable by Lemma~\ref{lem:pullexist}, and that $B$ refines $A$. Apply the FinStar algorithm.
	
Combining the algorithms, routines and subroutines,
we obtain the first part of the theorem.
For the second part, note that if all vertices have coordinated over~$K$,
then so do all hyperplanes and their intersections.  This implies that
the whole construction is defined over~$K$, as desired.
\end{proof}

\medskip

\section{Proof of Alexander's conjecture}\label{s:Alex}
	
It was shown in \cite{AM03}, that Theorem~\ref{t:Alex} follows from Theorem~\ref{t:Oda-space}.
We include a short proof for completeness.

\smallskip

\nin
{\em Proof.}
Let $A$ and $B$ be two simplicial complexes, and let $\varphi: A \rightarrow B$ be a PL homeomorphism.
Observe that by pulling back the triangulation of $B$ to~$A$, we can find a subdivision $A'$ of~$A$
such that $\varphi: A' \rightarrow B$ is linear on every face of~$A'$.
	
Observe now that if $A''$ is a stellar subdivision of $A$ that refines $A'$,
then $\varphi: A'' \rightarrow B$ is linear as well.
Hence, we can think of $B$ as a geometric simplicial complex, and $A''$ as a
geometric subcomplex.  Apply the strong factorization theorem (Theorem~\ref{t:Oda-space}),
to obtain a common stellar subdivision of $A''$ and $B$, and therefore of $A$ and~$B$.
\qed
	
\medskip

\section{Final remarks and open problems} \label{s:finrem}

\subsection{Unweighted Oda's conjecture} \label{ss:finrem-unweighted}
Now that the weighted Oda conjecture is settled (Corollary~\ref{c:birational}),
is natural to ask about the \defng{unweighted Oda conjecture} \cite{Oda78}.
This conjecture concerns lattice fans in an ambient lattice $\Lambda$.

Consider two simplicial, unimodular fans with the same support.
Here by \defn{unimodular} \ts we mean that the fan is generated by lattice vectors,
and that the lattice points in each defining ray~$\rho$ of a simplicial cone~$\sigma$,  
span the sublattice generated by $(\mathrm{span}~\sigma) \cap \Lambda$. 
Consider now only \defn{smooth} stellar subdivisions of simplices: 
where as in stellar subdivisions we introduced a new vertex~$z$ at 
arbitrary coordinates, here we only allow to introduce the lattice 
point \. $z=\sum e_\rho$\., where the summation is over $\rho$
defining ray of~$\sigma$, and $e_\rho$ is the lattice point 
in~$\rho$ generating $\rho \cap \Lambda$.

\begin{question}\label{q:oda-smooth}
Consider two unimodal fans of the same support. Are there two common iterated
stellar subdivisions at \emph{smooth} centers?
\end{question}

The algorithm as present does not give this result. It is easy to see that the
Scaling algorithm can be modified to work with respect to the restrictions to
smooth subdivisions. Unfortunately, we do not know how to modify the FinStar
algorithm.

\subsection{Toroidalization and general varieties}

It is natural to ask whether the Oda's program for toric varieties extends
to general varieties connected by birational maps. This is an open problem,
and subject of the \defng{toroidalization conjecture} \cite{AMR99}.
Without getting technical, the question is whether a birational morphism
of varieties can be turned, after blowups at smooth centers,
into a morphism of toric varieties.  Thanks to the work of Cutkosky
\cite{Cut07}, this is illuminated for varieties up to dimension~$3$.

\subsection{Distance between topological triangulations}  \label{ss:finrem-comp-top}
For PL manifolds in dimensions $d\ge 4$, the problem of homeomorphism
is undecidable \cite{Mar58}.  This implies that the number of stellar
subdivisions needed in Theorem~\ref{t:Alex} is not computable.
We refer to \cite{AFW15,Lac22} for detailed surveys of decidability
and complexity of the homeomorphism and related problems.


\subsection{Distance between geometric triangulations}  \label{ss:finrem-comp}
There are few results on distances between geometric triangulations
under different types of flips.  We refer to \cite{San06} for a survey
on bistellar flips when the graph is disconnected in dimension $d\ge 5$
(when new vertices cannot be added).  When both stellar flips and
reverse stellar flips are added, a recent upper bound in \cite{KP21}
is exponential in~$d$ and polynomial in the number of simplices
(for fixed~$d$).  Our preliminary calculations show that for
triangulations of simplices, the bound we give is roughly of
the same order.
		
\subsection{Da Silva and Karu's algorithm}  \label{ss:finrem-DK}
Note that our choices of stellar subdivisions are asymmetric
with respect to triangulations and uses a delicate ordering
given by the shedding routine.  In~\cite{DK11}, the authors
proposed an algorithm for common stellar subdivision and
conjectured that it works in finite time.  It would be
interesting to see if our proof of Theorem~\ref{t:Oda-space}
helps to resolve the conjecture.  Note that this would
simultaneously give a positive answer to Question~\ref{q:oda-smooth},
since the algorithm of Da Silva and Karu uses only
smooth stellar subdivisions.

\subsection{Dissections}  \label{ss:finrem-dis}
For dissections of polyhedra, there is a natural notion of
\emph{elementary dissection} which consists of dividing a simplex
into two. Motivated by applications to scissors congruence, Sah
claimed in \cite[Lemma~2.2]{Sah79} without a proof,
that every two dissections of a geometric complex have a common
dissection obtained as composition of elementary dissections.
It would be interesting to see if the approach in this paper
can be extended to prove this result.
	
Note that both stellar subdivisions and bistellar flips are
compositions of elementary dissections and their inverses;
these are called \emph{elementary moves}.
Ludwig and Reitzner proved in \cite{LR06} that
all dissections of a geometric complex are connected by
elementary moves.  For convex polygons in the plane, see a
self-contained presentation of the proof in \cite[$\S$17.5]{Pak10}.
We refer to \cite{LR06} also for an overview of the previous
literature, and for applications to valuations.

	\smallskip

\subsection*{Acknowledgements}
We are grateful to Dan Abramovich, Sergio Da~Silva, Joaquin Moraga,
Nikolai Mn\"ev and Tadao Oda for helpful remarks.
The first author is supported by the ISF, the
Centre National de Recherche Scientifique, and the ERC.
The second author is partially supported by the~NSF.

\vskip.8cm

	{\footnotesize

\vskip.5cm
	}

\end{document}